\newcommand{\R}{\mathbbm{R}}      
\newcommand{\Bc}{\mathcal{B}}
\newcommand{\Eb}{\mathbbm{E}}
\newcommand{\Fc}{\mathcal{F}}
\newcommand{\Gc}{\mathcal{G}}
\newcommand{\Hc}{\mathcal{H}}
\newcommand{\Lc}{\mathcal{L}}
\newcommand{\Pc}{\mathcal{P}}
\newcommand{\Uc}{\mathcal{U}}
\newcommand{\Vc}{\mathcal{V}}
\newcommand{\Xc}{\mathcal{X}}
\newcommand{\Zc}{\mathcal{Z}}
\newcommand{\1}{\mathbbm{1}}
\newcommand{\argmin}{\mathop{\rm argmin}}
\newcommand{\graph}{\mathop{\rm graph}}
\newcommand{\qsim}{\mathrel{\overset{\raisebox{-0.02em}{${q}$}}{\sim}}}
\newtheorem{proposition}{Proposition}[section]
\newtheorem{theorem}[proposition]{Theorem}
\newtheorem{corollary}[proposition]{Corollary}
\newtheorem{lemma}[proposition]{Lemma}
\newtheorem{definition}[proposition]{Definition}
\newtheorem{remark}[proposition]{Remark}
\newtheorem{example}[proposition]{Example}
\newenvironment{tightitemize}{%
    \list{{\textup{$\bullet$}}}{\settowidth\labelwidth{{\textup{\qquad}}}
    \leftmargin\labelwidth \advance\leftmargin\labelsep
    \parsep 0pt plus 1pt minus 1pt \topsep 3pt \itemsep 3pt
    }}{\endlist}
\newenvironment{tightlist}[1]{%
    \list{{\textup{(\roman{enumi})}}}{\settowidth\labelwidth{{\textup{(#1)}}}
    \leftmargin -6pt \advance\leftmargin\labelsep \itemindent \parindent
    \parsep 0pt plus 1pt minus 1pt \topsep 0pt \itemsep 0pt
    \usecounter{enumi}}}{\endlist}
\newenvironment{tightenumerate}[1]{%
    \list{{\bfseries\textup{\arabic{enumi}.}}}{\settowidth\labelwidth{{\textup{(#1)}}}
    \leftmargin -6pt \advance\leftmargin\labelsep \itemindent \parindent
    \parsep 0pt plus 1pt minus 1pt \topsep 0pt \itemsep 0pt
    \usecounter{enumi}}}{\endlist}
\title{Process-Based Risk Measures and Risk-Averse Control of Discrete-Time Systems}
\author{Jingnan Fan\footnote{
Rutgers University, RUTCOR, Piscataway, NJ 08854, USA, Email: {jf546@rutgers.edu}}
 \and Andrzej Ruszczy\'nski\footnote{
Rutgers University, Department of Management Science and Information Systems, Piscataway, NJ 08854, USA, Email: {rusz@rutgers.edu}}}
\date{\small November 5, 2014; revised February 15,2016; revised November 2016}
\begin{document}

\maketitle

\begin{abstract}
For controlled discrete-time stochastic processes we introduce a new class of dynamic  risk measures, which we call process-based. Their main features
are that they measure risk of processes that are functions of the history of a base process. We introduce a new concept of conditional stochastic
time consistency and we derive the structure of process-based risk measures enjoying this property. We show that they can be equivalently
represented by a collection of static law-invariant risk measures on the space of functions of the state of the base process. We apply this
result to controlled Markov processes and we derive dynamic programming equations.\\
\emph{Keywords:} Dynamic Risk Measures, Time Consistency, Dynamic Programming
\end{abstract}


\section{Introduction}
The objective of this paper is to provide theoretical foundations of the theory of dynamic risk measures for controlled discrete-time stochastic processes, in particular, Markov processes.

 The theory of dynamic risk measures in discrete time has been intensively developed in the last 10 years (see
 \cite{Scandolo:2003,Riedel:2004,roorda2005coherent,follmer2006convex,CDK:2006,RuSh:2006b,ADEHK:2007,PflRom:07,KloSch:2008,jobert2008valuations,cheridito2011composition}
 and the references therein). The basic setting is the following: we have a probability space $(\varOmega,\Fc,P)$, a filtration
 $\{\Fc_t\}_{t=1,\dots,T}$ with a trivial $\Fc_1$, and we define appropriate spaces $\Zc_t$ of $\Fc_t$-measurable random variables, $t=1,\dots,T$.
 For each $t=1,\dots,T$, a mapping $\rho_{t,T}:\Zc_T\to \Zc_t$ is called a \emph{conditional risk measure}. The central role in the theory
 is played by the concept of \emph{time consistency},  which regulates relations between the mappings $\rho_{t,T}$ and $\rho_{s,T}$ for different $s$ and $t$.
 One definition employed in the literature is the following: \emph{for all $Z,W\in \Zc_T$, if $\rho_{t,T}(Z) \le \rho_{t,T}(W)$ then $\rho_{s,T}(Z) \le \rho_{s,T}(W)$
 for all $s<t$}. This can be used to derive recursive relations $\rho_{t,T}(Z) = \rho_t\big(\rho_{t+1,T}(Z)\big)$, with simpler
\emph{ one-step conditional risk mappings} $\rho_t:\Zc_{t+1}\to\Zc_t$, $t=1,\dots,T-1$. Much effort has been devoted to derive dual representations
of the conditional risk mapping and to study their evolution in various settings.

When applied to  processes described by controlled kernels, in particular, to Markov processes, the theory of dynamic measures of risk encounters difficulties. The spaces $\Zc_t$ are different for different $t$, and thus each one-step mapping $\rho_t$ has different domain and range spaces. With $\Zc_t$ containing all $\Fc_t$ measurable random variables, arbitrary dependence
of $\rho_t$ on the past is allowed.  Moreover, no satisfactory theory of
law invariant dynamic risk measures exists, which would be suitable for Markov control problems (the restrictive
definitions of law invariance employed in \cite{kupper2009representation} and \cite{shapiro2012time} lead to conclusions of limited practical
usefulness, while the translation of the approach of \cite{weber2006distribution} to the Markov case appears to be difficult).
These difficulties are compounded in the case of controlled processes, when a control policy changes the probability measure
on the space of paths of the process. Risk measurement of the entire family of processes defined by control policies is needed.

Motivated by these issues, in
\cite{Ruszczynski2010Markov},  we introduced a specific class of dynamic risk measures, which is well-suited for Markov problems.
We postulated that the one-step conditional risk mappings $\rho_t$ have a special form, which allows for their representation in terms of static risk measures on a space of functions defined on the state space of the Markov process. This restriction allowed for the development of dynamic programming equations and corresponding solution methods, which generalize the well-known results for expected value problems. Our ideas were successfully extended in \cite{ccavus2014transient,ccavus2014computational,lin2013dynamic,shen2013risk}. However, our construction of the Markov risk measures appeared somewhat arbitrary.

In this paper, we introduce and analyze a general class of risk measures, which we call \emph{process-based}. We consider a controlled
process $\{X_t\}_{t=1,\dots,T}$ taking values in a Polish space $\Xc$ (the \emph{state space}),
whose conditional distributions are described by controlled history-dependent
transition kernels
\[
Q_t: {\Xc}^t \times \Uc \rightarrow \Pc(\Xc),\quad t=1,\dots,T-1,
\]
 where
 \[
 \Xc^t = \underbrace{\Xc \times \dots \times \Xc}_{\text{$t$ times}},
  \]
  and $\Uc$ is a certain \emph{control space}. Any history-dependent (measurable) control $u_t=\pi_t(x_1,\dots,x_t)$ is allowed.
 In this setting, we are only interested in measuring risk of  stochastic processes
of the form $Z_t= c_t(X_t,u_t)$, $t=1,\dots,T$, where $c_t:\Xc\times\Uc \to\R$ can be any bounded measurable function.
This restriction of the
class of stochastic processes for which risk needs to be measured is one of the two cornerstones of our approach. The other cornerstone
is our new concept of \emph{stochastic conditional time-consistency}. It is more restrictive than the usual time consistency, because it
involves conditional distributions and uses stochastic dominance rather than the pointwise order. These two foundations allow for the
development of a theory of dynamic risk measures which can be fully described by a sequence of
static law invariant risk measures on a space~$\Vc$
of measurable functions on the state space $\Xc$. In the special case of controlled Markov processes, we derive the structure postulated
in \cite{Ruszczynski2010Markov}, thus providing its solid theoretical foundations. We also derive dynamic programming equations in a much more
general setting than that of \cite{Ruszczynski2010Markov}.

In the extant literature, three basic approaches to introduce risk aversion in  Markov decision processes have been employed:
utility functions (see, e.g., \cite{Jaquette1973,Jaquette1975,Denardo1979,bauerle2013more,jaskiewicz2013persistently}), mean--variance models (see. e.g.,
\cite{White1988,Filar1989,Mannor2013,arlotto2014markov}), and entropic (exponential) models (see, e.g.,
\cite{Howard1971,Marcus1997,bielecki1999risk,Coraluppi1999,DiMasi1999,Levitt2001,bauerle2013more}). Our approach generalizes the utility and exponential models;
the mean--variance models do not satisfy, in general, the monotonicity and time-consistency conditions, except the version of \cite{ChenLiZhao2014}.

The paper is organized as follows. In sections \ref{s:preliminaries}--\ref{s:consistency},
 we formalize the basic model and review the concepts of risk measures and their time consistency.
The first set of original results are presented in section~\ref{s:sctc-trm};
we introduce a new concept of stochastic conditional time consistency and we characterize
the structure of dynamic risk measures enjoying this property (Theorem \ref{prop-li-risk-trans-map}). In section \ref{s:controlled-processes},
we extend these ideas to the case of
controlled processes, and we prove Theorem \ref{th-contr-li-risk-trans-map} on the structure of measures of risk in this case. These results are further specialized to controlled Markov processes in section \ref{s:Markov}. We introduce the concept of a Markov risk measure and we derive its structure
(Theorem \ref{th-markov2}). In section \ref{ss:markov-DP}, we prove an analog of dynamic programming equations in this case.

\section{Risk Measures Based on Observable Processes}
\label{s:processes}

In this section we introduce fundamental concepts and properties of dynamic risk measures for uncontrolled stochastic processes in discrete time. In subsection \ref{s:preliminaries} we set up our probabilistic framework, and in subsections \ref{s:dynamic} and \ref{s:consistency} we revisit some  important concepts existing in the literature. In subsection \ref{s:sctc-trm}, we introduce the new notion of stochastic conditional time consistency, which is a stronger requirement on the dynamic risk measure than the standard time consistency, and which is particularly useful for controlled stochastic processes. Based on this concept, we derive the structure of dynamic risk measures involving transition risk mappings: a family of static risk measures on the space of functions of a state.

\subsection{Preliminaries}
\label{s:preliminaries}

In all subsequent considerations, we work with a Polish space subset $\Xc$ and the canonical measurable space $\left(\Xc^T, \Bc(\Xc)^T\right)$ where $T$ is a natural number and $\Bc(\Xc)^T$ is the product $\sigma$-algebra of Borel sets. We use $\{X_t\}_{t = 1,\dots, T}$ to denote the discrete-time process of canonical projections. We also define $\Hc_t = \Xc^t$ to be the space of possible histories up to time $t$, and we use $h_t$ for a generic element of $\Hc_t$: a specific history up to time $t$. The random vector $(X_1, \cdots, X_t)$ will be denoted by $H_t$.

We assume that for all $t=1,\dots, T-1$, the transition kernels, which describe the conditional distribution of $X_{t+1}$, given $X_1, \cdots, X_t$, are measurable functions
\begin{equation}
	\label{kernel}
	Q_t: {\Xc}^t \rightarrow \Pc(\Xc),\quad t=1,\dots,T-1,
\end{equation}
where $\Pc(\Xc)$ is the set of probability measures on $(\Xc,\Bc(\Xc))$.  These kernels, along with the initial distribution of $X_1$, define a unique probability measure $P$ on the product space $\Xc^T$ with the product $\sigma$-algebra.

For a stochastic system described above, we consider a sequence of random variables $\{Z_t\}_{t = 1 ,\dots, T}$ taking values in ${\R}$; we assume that lower values of $Z_t$ are preferred (e.g., $Z_t$ represents a ``cost'' at time $t$). We require $\{Z_t\}_{t = 1 ,\dots, T}$ to be bounded and adapted to $\{\Fc_t\}_{t = 1 ,\dots, T}$ - the natural filtration generated by the process $X$.  In order to facilitate our discussion, we introduce the following spaces:
\begin{equation}
	\label{Zt}
	\Zc_t = \left\{ \, Z: \Xc^T \rightarrow {\R} \, \big| \, Z \textup{ is } \Fc_t  \textup{-measurable and bounded} \right\}, \quad t=1,\dots,T.
\end{equation}
It is then equivalent to say that $Z_t \in {\Zc}_t$. We also introduce the spaces
\begin{equation*}
	{\Zc}_{t,T} = {\Zc}_t \times \dots \times {\Zc}_T,\quad t=1,\dots,T.
\end{equation*}

Since $Z_t$ is $\Fc_t$-measurable, a measurable function $\phi_t : {\Xc}^t \rightarrow {\R}$ exists such that
$Z_t=\phi_t(X_1,\dots,X_t)$. With a slight abuse of notation, we still use $Z_t$ to denote this function.

\subsection{Dynamic risk measures}
\label{s:dynamic}
In this subsection, we quickly review some definitions and concepts related to risk measures.  All relations (e.g., equality, inequality) between random variables are understood in the ``everywhere'' sense.

\begin{definition} \label{def_CRM}
	A mapping $\rho_{t,T}: {\Zc}_{t,T} \to {\Zc}_t$, where $1 \le t \le T$, is called a \emph{conditional risk measure}, if it has the \emph{monotonicity property}: for all $(Z_t,\dots,Z_T)$ and $(W_t,\dots,W_T)$ in  ${\Zc}_{t,T}$, if $Z_s \le W_s$, for all $s=t, \dots, T$, then $
\rho_{t,T}(Z_t,\dots,Z_T) \le \rho_{t,T}(W_t,\dots,W_T)$.
\end{definition}

\begin{definition} \label{basicProperties} A conditional risk measure $\rho_{t,T}: {\Zc}_{t,T} \to {\Zc}_t$
\begin{tightlist}{ii}
\item is \emph{normalized} if $\rho_{t,T}(0,\dots,0)=0$;
\item is \emph{translation-invariant} if for all $(Z_t,\dots,Z_T) \in {\Zc}_{t,T}$,
		$$\rho_{t,T}(Z_t,\dots,Z_T)=Z_t+\rho_{t,T}(0,Z_{t+1},\dots,Z_T).$$
\end{tightlist}
\end{definition}
Throughout the paper, we assume all conditional risk measures to be at least normalized. Translation-invariance is a fundamental property, which will also be frequently used; under normalization,  it implies that $\rho_{t,T}(Z_t,0, \cdots, 0)=Z_t$.
\begin{definition}\label{LP}
	A conditional risk measure $\rho_{t,T}$ has the \emph{local property} if
	$$\1_A \rho_{t,T}(Z_t,\dots,Z_T)=\rho_{t,T}(\1_A Z_t,\dots,\1_A Z_T),$$
for all $(Z_t,\dots,Z_T) \in {\Zc}_{t,T}$ and for all events $A \in {\Fc}_t$.
\end{definition}
The local property means that the conditional risk measure at time $t$ restricted to any ${\Fc}_t$-event $A$ is not influenced by the values that $Z_t,\dots,Z_T$ take on $A^c$.
\begin{definition} \label{def_DRM}
	A \emph{dynamic risk measure} $\rho=\big\{\rho_{t,T}\big\}_{t = 1, \dots, T} $ is a sequence of conditional risk measures $\rho_{t,T}: {\Zc}_{t,T} \to {\Zc}_t$. We say that $\rho$ is normalized, translation-invariant, or has the local property, if all $\rho_{t,T}$, $t=1,\dots,T$, satisfy the respective conditions of Definitions \ref{basicProperties} or \ref{LP}.
\end{definition}

\subsection{Time Consistency}
\label{s:consistency}
The notion of time consistency can be formulated in different ways, with weaker or stronger assumptions; but the key idea is that if one sequence of costs, compared to another sequence, has the same current cost and lower risk in the future, then it should have lower current risk. In this and the next subsection, we discuss two formulations of time consistency: the (now) standard one, and our new proposal specially suited for process-based measures. We  also show how the tower property (the recursive relation between $\rho_{t,T}$ and $\rho_{t+1,T}$ that the time consistency implies) improves with the more refined time consistency concept. The following
definition of time consistency was employed in \cite{Ruszczynski2010Markov}.

\begin{definition}\label{tc}
	A dynamic risk measure $\big\{\rho_{t,T}\big\}_{t=1,\dots,T}$ is \emph{time-consistent} if for any $1 \le t < T$ and for all $(Z_t,\dots,Z_T), (W_t,\dots,W_T) \in {\Zc}_t$, the conditions
	\[
	\left\{ \begin{aligned} &Z_t=W_t,\\ &\rho_{t+1,T}(Z_{t+1},\dots,Z_T) \le \rho_{t+1,T}(W_{t+1},\dots,W_T),\\ \end{aligned} \right.
	\]
	imply that $\rho_{t,T}(Z_t,\dots,Z_T) \le \rho_{t,T}(W_t,\dots,W_T)$.
\end{definition}

%

It turns out that a translation-invariant and time-consistent dynamic risk measure can be decomposed into and then reconstructed from so-called \emph{one-step conditional risk mappings}.

\begin{theorem}[\cite{Ruszczynski2010Markov}]
	\label{t:one-step}
	A dynamic risk measure $\big\{\rho_{t,T}\big\}_{t=1,\dots,T}$ is translation-invariant and time-consistent if and only if there exist mappings $\rho_t: {\Zc}_{t+1} \to {\Zc}_t$, $t=1,\dots, T-1$,  satisfying the monotonicity and normalization properties, called \emph{one-step conditional risk mappings}, such that for all $t=1,\dots,T-1$,
	\begin{equation} \label{one-step}
		\rho_{t,T}(Z_t,\dots,Z_T)=Z_t+\rho_t\big(\rho_{t+1,T}(Z_{t+1},\dots,Z_T)\big).
	\end{equation}
\end{theorem}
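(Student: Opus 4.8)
The plan is to establish the equivalence by arguing each implication separately, with the ``only if'' direction being the substantive one. For the ``if'' direction, suppose mappings $\rho_t:\Zc_{t+1}\to\Zc_t$ with the stated properties exist and define $\rho_{t,T}$ by the recursion \eqref{one-step}, unrolled from $t=T$ (where $\rho_{T,T}(Z_T)=Z_T$) down to $t$. Monotonicity of each $\rho_{t,T}$ follows by backward induction: if $Z_s\le W_s$ for all $s\ge t$, then $\rho_{t+1,T}(Z_{t+1},\dots,Z_T)\le\rho_{t+1,T}(W_{t+1},\dots,W_T)$ by the inductive hypothesis, hence applying $\rho_t$ preserves the inequality by monotonicity of $\rho_t$, and adding $Z_t\le W_t$ preserves it again. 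Normalization is immediate from $\rho_t(0)=0$. Translation-invariance is visible directly from the form \eqref{one-step}, since $Z_t$ is pulled out additively and what remains depends only on $Z_{t+1},\dots,Z_T$. Time-consistency is also immediate: if $Z_t=W_t$ and $\rho_{t+1,T}(Z_{t+1},\dots,Z_T)\le\rho_{t+1,T}(W_{t+1},\dots,W_T)$, then applying $\rho_t$ and adding the common $Z_t$ gives $\rho_{t,T}(Z_t,\dots,Z_T)\le\rho_{t,T}(W_t,\dots,W_T)$.

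For the ``only if'' direction, assume $\{\rho_{t,T}\}$ is translation-invariant and time-consistent. The natural candidate for the one-step mapping is
\[
\rho_t(Y) := \rho_{t,T}(0,Y,0,\dots,0), \qquad Y\in\Zc_{t+1},
\]
noting that $(0,Y,0,\dots,0)\in\Zc_{t,T}$ since $Y$ being $\Fc_{t+1}$-measurable places it legitimately in the $(t{+}1)$-st slot. Monotonicity and normalization of $\rho_t$ are inherited directly from those of $\rho_{t,T}$. It then remains to verify the identity \eqref{one-step}. First, using translation-invariance of $\rho_{t,T}$, we reduce to showing $\rho_{t,T}(0,Z_{t+1},\dots,Z_T)=\rho_t\big(\rho_{t+1,T}(Z_{t+1},\dots,Z_T)\big)$. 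Set $Y:=\rho_{t+1,T}(Z_{t+1},\dots,Z_T)\in\Zc_{t+1}$. By translation-invariance of $\rho_{t+1,T}$ applied to the constant-in-future sequence, $\rho_{t+1,T}(Y,0,\dots,0)=Y$, so the two tuples $(Z_{t+1},\dots,Z_T)$ and $(Y,0,\dots,0)$ have equal $\rho_{t+1,T}$-value; prepending the common $0$ in slot $t$ and invoking time-consistency in both directions yields $\rho_{t,T}(0,Z_{t+1},\dots,Z_T)=\rho_{t,T}(0,Y,0,\dots,0)=\rho_t(Y)$, which is exactly what we wanted.

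The main obstacle I anticipate is the bookkeeping around the two-sided use of time-consistency: Definition \ref{tc} as stated gives only the implication ``equal present and dominated future $\Rightarrow$ dominated present,'' so to conclude the \emph{equality} $\rho_{t,T}(0,Z_{t+1},\dots,Z_T)=\rho_{t,T}(0,Y,0,\dots,0)$ one must apply it once with $Z\le W$ and once with $W\le Z$, which is legitimate precisely because $\rho_{t+1,T}$ evaluated on the two future tuples gives literally the same element $Y$. A secondary point of care is checking that translation-invariance indeed delivers $\rho_{s,T}(Y,0,\dots,0)=Y$ for $Y\in\Zc_s$: this is the consequence of normalization plus translation-invariance noted in the text after Definition \ref{basicProperties}, and it is what lets us replace an arbitrary future cost stream by its single-number risk evaluation without changing anything downstream. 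Everything else is routine verification of the monotonicity/normalization axioms for the constructed objects.
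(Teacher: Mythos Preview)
The paper does not actually prove this theorem; it is stated with a citation to \cite{Ruszczynski2010Markov} and no proof is given. Your argument is correct and follows the standard route one finds in that reference: for the ``if'' direction you read off translation-invariance and time-consistency directly from the recursive form, and for the ``only if'' direction you define $\rho_t(Y)=\rho_{t,T}(0,Y,0,\dots,0)$ and use translation-invariance together with normalization to get $\rho_{t+1,T}(Y,0,\dots,0)=Y$, then apply time-consistency in both directions to collapse $(0,Z_{t+1},\dots,Z_T)$ to $(0,Y,0,\dots,0)$. The two minor points you flagged---the two-sided use of Definition~\ref{tc} to upgrade an inequality to an equality, and the need for normalization in the identity $\rho_{s,T}(Y,0,\dots,0)=Y$---are exactly the places where care is required, and you handle them correctly. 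One cosmetic remark: in the ``if'' direction the dynamic risk measure $\{\rho_{t,T}\}$ is \emph{given} and assumed to satisfy the recursion, so monotonicity is already part of the hypothesis (Definition~\ref{def_CRM}); your inductive verification of it is harmless but unnecessary.
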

This relation ir related to the \emph{Koopmans equation} \cite{koopmans1960stationary} for utility functions. The operators
$A(Z_{t},Z_{t+1}) = Z_t+ \rho_t(Z_{t+1})$ generalize the concept of \emph{aggregator}
to  measures of risk.

In general, time consistency does not imply the local property, unless additional conditions are satisfied.

Conceptually, the one-step conditional risk mappings play a similar role to one-step conditional expectations, and will be very useful when an analog of the tower property is involved. At this stage, without further refinement of assumptions, it remains a fairly abstract and general object that is hard to  characterize. In \cite{jaskiewicz2013persistently}, for the case of the expected utility model,
$\rho_t$ was a conditional expectation of a pointwise monotonic transformation of its argument.
In \cite{Ruszczynski2010Markov}, a more general, but seemingly special form of this one-step conditional risk mappings was imposed, which was well suited for Markovian applications, but it was unclear whether  other forms of such mappings exist. In order to gain deeper understanding of these concepts,  we introduce a stronger notion of time consistency, and we argue that any  one-step conditional risk mapping is of the form postulated in \cite{Ruszczynski2010Markov}. To this end, we use the particular structure of the space  $\left(\Xc^T, \Bc(\Xc)^T\right)$ and the way a probability measure is defined on this space.

\subsection{Stochastic Conditional Time-Consistency and Transition Risk Mappings}
\label{s:sctc-trm}
We now refine the concept of time-consistency for process-based risk measures.
\begin{definition}
	\label{sc-tc}
	A dynamic risk measure $\big\{\rho_{t,T}\big\}_{t=1,\dots,T}$ is  \emph{stochastically conditionally time-consistent with respect to $\{Q_t\}_{t=1,\dots,T-1}$} if for any $1 \leq t \leq T-1$, for any $h_t \in \Xc^t$, and for all $(Z_t,\dots,Z_T),\, (W_t,\dots,W_T) \in {\Zc}_{t,T}$, the conditions
	\begin{equation}
		\label{sc-tec-cond}
		\left\{
		\begin{aligned}
			&Z_t(h_t)=W_t(h_t),\\
			&\big( \rho_{t+1,T}(Z_{t+1},\dots,Z_T)\mid H_t=h_t \big) \preceq_{\text{\rm st}}
			\big( \rho_{t+1,T}(W_{t+1},\dots,W_T)\mid H_t=h_t \big),\\
		\end{aligned}
		\right.
	\end{equation}
	imply
	\begin{equation}
		\label{tc-impl}
		\rho_{t,T}(Z_t,\dots,Z_T)(h_t) \le \rho_{t,T}(W_t,\dots,W_T)(h_t),
	\end{equation}
	where the relation $\preceq_{\text{\rm st}}$ is the conditional stochastic order understood as follows:
	\begin{multline*}
		\lefteqn{Q_t(h_t)\Big( \big\{ \, x \, | \, \rho_{t+1,T}(Z_{t+1},\dots,Z_T)(h_t,x)> \eta \, \big\} \Big)}\\
		\le
		Q_t(h_t)\Big( \big\{ \, x \, | \, \rho_{t+1,T}(W_{t+1},\dots,W_T)(h_t,x)> \eta \, \big\} \Big),\quad \forall\,\eta\in \R.
	\end{multline*}
\end{definition}
	When the choice of the underlying transition kernels is clear from the context, we will simply say that the dynamic risk measure is stochastically conditionally time-consistent.

\begin{proposition}
	If a dynamic risk measure $\big\{\rho_{t,T}\big\}_{t=1,\dots,T}$ is stochastically conditionally time-consistent and has the translation property, then it is  time-consistent and has the local property.
\end{proposition}

\begin{proof} If  $\big\{\rho_{t,T}\big\}_{t=1,\dots,T}$ is stochastically conditionally time-consistent,
	then it satisfies Definition \ref{tc} and is time-consistent.
	
	Let us prove by induction on $t$ from $T$ down to $1$ that $\rho_{t,T}$ have the local property.
	Clearly,  $\rho_{T,T}$ does: if $A \in {\Fc}_T$, then Definition \ref{basicProperties} yields
	$\1_A\rho_{T,T}(Z_T)=\1_AZ_T=\rho_{T,T}(\1_AZ_T)$.
	
	Suppose  $\rho_{t+1,T}$ satisfies the local property for some $1\le t <T$, and consider
	any $A \in {\Fc}_t$, any $h_t \in \Xc^t$, and any $(Z_t,\dots,Z_T) \in {\Zc}_{t,T}$.
	Two cases may occur.
	\begin{tightitemize}
		\item If $\1_A(h_t)=0$, then $[\1_AZ_t](h_t)=0$. The local property for $t+1$ yields:
		\[
		\big[\rho_{t+1,T}(\1_AZ_{t+1},\dots,\1_AZ_T)\big](h_t,\cdot) = \big[\1_A\rho_{t+1,T}(Z_{t+1},\dots,Z_T)\big](h_t,\cdot) = 0.
		\]
		By stochastic conditional time consistency,
		\[
\rho_{t,T}(\1_AZ_t,\1_A Z_{t+1},\dots,\1_AZ_T) (h_t) = \rho_{t,T}(0, \dots, 0)(h_t) = 0.
\]
		\item If $\1_A(h_t)=1$, then $[\1_AZ_t](h_t)=Z_t(h_t)$. The local property for $t+1$ implies that
		\begin{align*}
			\big[\rho_{t+1,T}(\1_AZ_{t+1},\dots,\1_AZ_T)\big](h_t,\cdot) &= \big[\1_A\rho_{t+1,T}(Z_{t+1},\dots,Z_T)\big](h_t,\cdot)\\
			&= \rho_{t+1,T}(Z_{t+1},\dots,Z_T)(h_t,\cdot).
		\end{align*}
		By stochastic conditional time consistency,
		\[
\rho_{t,T}(\1_AZ_t,\dots,\1_AZ_T) (h_t) = \break \rho_{t,T}(Z_t,\dots,Z_T)(h_t).
\]
	\end{tightitemize}
	In both cases, $\rho_{t,T}(\1_AZ_t,\dots,\1_AZ_T) (h_t) = \big[\1_A\rho_{t,T}(Z_t,\dots,Z_T)\big](h_t)$.
  \end{proof}


Generally, the definition of dynamic risk measures in Section \ref{s:dynamic} and the definition of time consistency property in Section \ref{s:consistency} are valid with any filtration on an underlying probability space $(\Omega,\Fc,P)$, instead of process-generated filtration. However, from now on, we only consider dynamic risk measures defined with a filtration generated by a \emph{specific} process, because we are interested in those risk measures which can be evaluated on each specific history path. That is why we call these risk measures ``process-based.''

The following proposition shows that the stochastic conditional time consistency implies that
the one-step risk mappings $\rho_t$ can be equivalently represented by static law-invariant risk measures on~$\Vc$,
the set of all bounded measurable functions on $\Xc$. We first slightly refine the standard concept of law invariance.

\begin{definition}
\label{d:law-invariance}
A risk measure $r:\Vc\to\R$ is law invariant with respect to the probability measure $q$ on $(\Xc,\Bc(\Xc))$, if for all $V,W\in \Vc$
\[
V \qsim W \Rightarrow r(V) = r(W),
\]
where $V\qsim W$ means that $q\{ V \le \eta\} = q\{ W \le \eta\}$ for all $\eta\in \R$.
\end{definition}

We can now state the main result of this section.
\begin{theorem}
	\label{prop-li-risk-trans-map}
	A process-based dynamic risk measure $\big\{\rho_{t,T}\big\}_{t=1,\dots,T}$ is transla\-tion-invariant and stochastically conditionally time-consistent if and only if functionals
	$\sigma_t: \graph(Q_t) \times \Vc \to {\R}$, $t=1,\dots, T-1$,
	exist, such that
	\begin{tightlist}{ii}
		\item for all $t=1,\dots,T-1$ and all $h_t \in \Xc^t$, the functional
		$\sigma_t(h_t,Q_t(h_t),\cdot)$ is a normalized, monotonic, and law-invariant risk measure on $\Vc$ with respect to the distribution
		$Q_t(h_t)$;
		\item for all $t=1,\dots,T-1$, for all $(Z_t,\dots,Z_T)\in {\Zc}_{t,T}$, and for all $h_t \in \Xc^t$,
		\begin{equation}
\label{li-risk-trans-map}
\rho_{t,T}(Z_t,\dots,Z_T)(h_t)=Z_t(h_t)+\sigma_t\big(h_t,Q_t(h_t),\rho_{t+1,T}(Z_{t+1},\dots,Z_T)(h_t,\cdot)\big).
		\end{equation}
	\end{tightlist}
	Moreover, for all $t=1,\dots,T-1$, $\sigma_t$ is uniquely determined by $\rho_{t,T}$ as follows:  for every $h_t \in \Xc^t$ and every $v\in \Vc$,
	\begin{equation}
		\label{sigma2}
		\sigma_t(h_t,Q_t(h_t),v)=\rho_{t,T}(0,V,0,\dots,0)(h_t),
	\end{equation}
	where $V\in \Zc_{t+1}$ satisfies the equation $V(h_t,\cdot) = v(\cdot)$, and can be arbitrary elsewhere.
\end{theorem}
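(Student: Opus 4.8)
The plan is to prove the two implications separately, concentrating the work on the ``only if'' direction (which also produces the uniqueness formula \eqref{sigma2}); the ``if'' direction is a direct verification.

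For ``if'', suppose $\sigma_1,\dots,\sigma_{T-1}$ are given with (i)--(ii). A downward induction on $t$, anchored at $\rho_{T,T}(Z_T)=Z_T$, shows that each $\rho_{t,T}$ determined through \eqref{li-risk-trans-map} is monotone (from monotonicity of $\sigma_t(h_t,Q_t(h_t),\cdot)$ in its last slot and of $\rho_{t+1,T}$) and normalized (from $\rho_{t+1,T}(0,\dots,0)=0$ and normalization of $\sigma_t$); translation-invariance is read off directly from \eqref{li-risk-trans-map}, since the term $Z_t(h_t)$ separates and the remaining argument does not involve $Z_t$. For stochastic conditional time-consistency I would take $(Z_\bullet),(W_\bullet)$ satisfying \eqref{sc-tec-cond}; cancelling the equal terms $Z_t(h_t)=W_t(h_t)$ in \eqref{li-risk-trans-map} reduces the required inequality to
\[
\sigma_t\big(h_t,Q_t(h_t),v_Z\big)\le\sigma_t\big(h_t,Q_t(h_t),v_W\big),
\]
with $v_Z=\rho_{t+1,T}(Z_{t+1},\dots,Z_T)(h_t,\cdot)\preceq_{\text{\rm st}}\rho_{t+1,T}(W_{t+1},\dots,W_T)(h_t,\cdot)=v_W$ under $Q_t(h_t)$; this is an instance of the monotonicity and law-invariance of $\sigma_t(h_t,Q_t(h_t),\cdot)$, using the standard fact that a normalized monotone law-invariant risk measure preserves first-order stochastic dominance.

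For ``only if'', fix $t$ and \emph{define} $\sigma_t$ by \eqref{sigma2}: given $h_t\in\Xc^t$ and $v\in\Vc$, choose any $V\in\Zc_{t+1}$ with $V(h_t,\cdot)=v(\cdot)$ (for instance $V(x_1,\dots,x_{t+1})=v(x_{t+1})$) and set $\sigma_t(h_t,Q_t(h_t),v):=\rho_{t,T}(0,V,0,\dots,0)(h_t)$. The first and decisive step is well-definedness. If $V,V'\in\Zc_{t+1}$ both restrict to $v$ on the fibre over $h_t$, then by translation-invariance and normalization $\rho_{t+1,T}(V,0,\dots,0)=V$ and $\rho_{t+1,T}(V',0,\dots,0)=V'$, and these functions coincide with $v$ on that fibre; hence the conditional laws given $H_t=h_t$ are identical, so $\preceq_{\text{\rm st}}$ holds in both directions, and applying the definition of stochastic conditional time-consistency twice (interchanging $V$ and $V'$) yields $\rho_{t,T}(0,V,0,\dots,0)(h_t)=\rho_{t,T}(0,V',0,\dots,0)(h_t)$. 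The identical two-sided argument with $V(h_t,\cdot)=v$, $V'(h_t,\cdot)=v'$ and $v\qsim v'$ gives law-invariance of $\sigma_t(h_t,Q_t(h_t),\cdot)$ with respect to $Q_t(h_t)$; normalization is inherited from $\rho_{t,T}(0,\dots,0)=0$; monotonicity follows by choosing, for $v\le w$, extensions with $V\le W$ everywhere and invoking monotonicity of $\rho_{t,T}$. It remains to establish the recursion \eqref{li-risk-trans-map}: by translation-invariance it suffices to show $\rho_{t,T}(0,Z_{t+1},\dots,Z_T)(h_t)=\sigma_t(h_t,Q_t(h_t),v)$ with $v=\rho_{t+1,T}(Z_{t+1},\dots,Z_T)(h_t,\cdot)$; choosing $V\in\Zc_{t+1}$ with $V(h_t,\cdot)=v$, the processes $(0,Z_{t+1},\dots,Z_T)$ and $(0,V,0,\dots,0)$ have one-step continuations $\rho_{t+1,T}(\cdot)$ that coincide on the fibre over $h_t$ (the second being $V$, by translation-invariance), so a further two-sided use of stochastic conditional time-consistency gives equality of $\rho_{t,T}(\cdot)(h_t)$ on the two processes, and the right side equals $\sigma_t(h_t,Q_t(h_t),v)$ by \eqref{sigma2}. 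Uniqueness is then immediate: substituting $Z_t=0$, $Z_{t+1}=V$ with $V(h_t,\cdot)=v$, and $Z_{t+2}=\dots=Z_T=0$ into \eqref{li-risk-trans-map} forces \eqref{sigma2}.

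The main obstacle --- and the point where the strengthened notion pays off --- is exactly this well-definedness/law-invariance step: ordinary time-consistency would only yield one-sided inequalities, whereas the refinement to the conditional \emph{stochastic} order lets every comparison be carried out in both directions, turning ``same conditional distribution on the fibre over $h_t$'' into genuine equalities of $\rho_{t,T}(\cdot)(h_t)$. A minor secondary point is the measurability of $h_t\mapsto\sigma_t(h_t,Q_t(h_t),v)$ for a fixed $v$, which is clear since one may then use an $h_t$-independent extension $V$, making $\rho_{t,T}(0,V,0,\dots,0)$ an $\Fc_t$-measurable function by construction.
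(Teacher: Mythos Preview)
Your proposal is correct and follows essentially the same route as the paper: define $\sigma_t$ via \eqref{sigma2}, use translation-invariance plus stochastic conditional time-consistency (applied two-sidedly) to get well-definedness, law-invariance, and the recursion \eqref{li-risk-trans-map}, and read off the converse from monotonicity and law-invariance of $\sigma_t$. If anything, you are more explicit than the paper on the well-definedness step and on monotonicity of $\sigma_t$ (the paper simply asserts that \eqref{sigma2} ``defines a normalized and monotonic risk measure''), and your closing remark on measurability of $h_t\mapsto\sigma_t(h_t,Q_t(h_t),v)$ is a detail the paper leaves implicit.
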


\begin{proof}
	Assume $\big\{\rho_{t,T}\big\}_{t=1,\dots,T}$ is translation-invariant and stochastically conditionally time-consistent.
We shall prove the existence of $\sigma_t$ satisfying \eqref{li-risk-trans-map}--\eqref{sigma2}.

 Formula \eqref{sigma2} defines a normalized and monotonic risk measure  on the space $\Vc$. Define, for a fixed $h_t\in \Xc^t$,
	\begin{align*}
		v(x) &=
		\rho_{t+1,T}(Z_{t+1},\dots,Z_T)(h_t,x), \quad \forall\,x\in \Xc,\\
		V(h_{t+1}) &= \begin{cases}
			v(x), & \textup{if } h_{t+1} = (h_t,x),\\
			0, & \textup{otherwise}.
		\end{cases}
	\end{align*}
By translation invariance and normalization,
	\[
	\rho_{t+1,T}(V,0,\dots,0)(h_t,\cdot) = V(h_t,\cdot) =   \rho_{t+1,T}(Z_{t+1},\dots,Z_T)(h_t,\cdot).
	\]
	Thus, by the translation property and stochastic conditional time consistency,
	\begin{align*}
		\rho_{t,T}(Z_t,\dots,Z_T)(h_t) &= Z_t(h_t)+\rho_{t,T}(0,Z_{t+1},\dots,Z_T)(h_t)\\
		&= Z_t(h_t)+\rho_{t,T}(0,V,0,\dots,0)(h_t)\\
		&= Z_t(h_t)+\sigma_t(h_t,Q_t(h_t),v).
	\end{align*}
	This chain of relations proves also the uniqueness of $\sigma_t$. We need only verify the postulated law invariance
	of $\sigma_t(h_t,Q_t(h_t),\cdot)$. If $V,V'\in \Zc_{t+1}$ have the same conditional distribution, given $h_t$, then Definition \ref{sc-tc} implies
	that $\rho_{t,T}(0,V,0,\dots,0)(h_t) = \rho_{t,T}(0,V',0,\dots,0)(h_t)$, and law invariance follows from \eqref{sigma2}.
	
	On the other hand, if such transition risk mappings exist, then $\big\{\rho_{t,T}\big\}_{t=1,\dots,T}$ is stochastically
conditionally time-consistent by the monotonicity and law invariance  of $\sigma(h_t,\cdot)$. We can now use \eqref{li-risk-trans-map}
	to obtain for any $t=1,\dots,T-1$, and for all $h_t\in \Xc^t$ the following identity:
	\begin{align*}
		\rho_{t,T}(0,Z_{t+1},\dots,Z_T)(h_t) &= \sigma_t\big(h_t,Q_t(h_t),\rho_{t+1,T}(Z_{t+1},\dots,Z_T)(h_t,\cdot)\big)\\
		&= \rho_{t,T}(Z_t,\dots,Z_T)(h_t)-Z_t(h_t),
	\end{align*}
	which is translation invariance of $\rho_{t,T}$.
  \end{proof}

\begin{remark}
	With a slight abuse of notation, we included the distribution $Q_t(h_t)$ as an argument of the transition risk mapping in view of the application to controlled processes.
\end{remark}

\begin{example}
\label{e:entropic}
In the theory of risk-sensitive Markov decision processes, the following family of \emph{entropic risk measures} is employed
(see \cite{Howard1971,Marcus1997,dai1998explicit,runggaldier1998concepts,bielecki1999risk,Coraluppi1999,DiMasi1999,Levitt2001,bauerle2013more}):
\[
\rho_{t,T}(Z_t,\dots,Z_T) = \frac{1}{\gamma} \ln \bigg(\Eb\Big[\exp\Big(\gamma \textstyle{\sum_{s=t}^T} Z_s\Big)\;\big|\; \Fc_t\Big]\bigg),\quad t=1,\dots,T,\quad \gamma>0.
\]
It is stochastically conditionally time-consistent, and corresponds to the transition risk mapping
\begin{equation}
\label{trm-entropic}
\sigma_t(h_t,q,v)  =  \frac{1}{\gamma}\ln\big( \Eb_q[e^{\gamma v}]\big)
= \frac{1}{\gamma}\ln\bigg( \int_\Xc e^{\gamma v(x)}\;q(dx)\bigg),\quad \gamma>0.
\end{equation}
In the
		construction of a dynamic risk measure, we use $q=Q_t(h_t)$.
We could also make $\gamma$ in \eqref{trm-entropic} dependent on the time $t$, the current state $x_t$, or even the entire
history $h_t$, and still obtain a stochastically conditionally time-consistent dynamic risk measure.
If $\gamma$ depends on $t$ and $x_t$ only, the mapping
\eqref{trm-entropic} corresponds to a Markov risk measure discussed in sections \ref{ss:markov-risk} and \ref{ss:markov-DP}.
\end{example}

	\begin{example}
		\label{e:semi}
		The following transition risk mapping satisfies the condition of Theorem \ref{prop-li-risk-trans-map}
and corresponds to a stochastically conditionally time-consistent dynamic risk measure:
		\begin{equation}
			\label{semi}
			\sigma_t(h_t,q,v) = \int_{\Xc} v(s)\;q(ds) +
\varkappa_t(h_t) \left(\int_{\Xc} \Big[\Big(  v(s)- \int_{\Xc} v(s')\;q(ds')\Big)_+\Big]^p\;q(ds)\right)^{1/p},
		\end{equation}
		where $\varkappa_t:\Xc^t\to[0,1]$ is a measurable function, and $p\in [1,+\infty)$. It is an analogue of the static mean--semideviation measure of risk,
		whose consistency with stochastic dominance is well--known \cite{ogryczak1999stochastic,ogryczak2001consistency}. In the
		construction of a dynamic risk measure, we use $q=Q_t(h_t)$. If $\varkappa_t$ depends on $x_t$ only, the mapping
\eqref{semi} corresponds to a Markov risk measure (see sections \ref{ss:markov-risk} and \ref{ss:markov-DP}).
	\end{example}

\begin{example}
\label{e:avar-sigma}
The following transition risk mapping is derived from
the Average Value at Risk \cite{rockuryas}:
		\begin{equation}
			\label{avar}
			\sigma_t(h_t,q,v) = \min_{\eta\in \R}
\left\{\eta + \frac{1}{\alpha_t(h_t)}\int_{\Xc} \big(  v(s)- \eta \big)_+\;q(ds)\right\},
		\end{equation}
		where $\alpha_t(h_t)$ is a measurable function with values in
$[\alpha_{\min},\alpha_{\max}]\subset (0,1)$. The mapping
\eqref{avar} satisfies the condition of Theorem \ref{prop-li-risk-trans-map};
its consistency with stochastic dominance is well--known \cite{ogryczak2002dual}.
\end{example}

	\begin{example}
		\label{e:non-consistent}
		Our use of the stochastic dominance relation in the definition of stochastic conditional time consistency rules out
		some candidates for transition risk mappings. Suppose
		$\sigma_t(h_t,q,v) = v(x_1)$,
		where $x_1\in \Xc$ is a selected state. Such a mapping is a coherent measure of risk, as a function of the last argument, and may be law invariant. In particular, it is law invariant with
		$\Xc=\{x_1,x_2\}$, $q(x_1) = 1/3$, $q(x_2)=2/3$, $v(x_1) = 3$, $v(x_2) = 1$, $w(x_1) = 2$, $w(x_2) = 4$.
		For this mapping, we have $v \preceq_{\text{\rm st}} w$ under $q$, but $\sigma_t(h_t,q,v) > \sigma_t(h_t,q,w)$, and thus
		the condition of stochastic conditional time consistency is violated. This is due to the fact that the probability of reaching $x_1$, no matter how small, does not
affect the value of the risk measure.
We consciously exclude such cases, because in controlled systems, to be discussed in the next section, the second argument ($q$) is the only one that depends on our decisions. It should be included in the definition of our preferences, if practically meaningful results are to be obtained.
	\end{example}

\section{Risk Measures for Controlled Stochastic Processes}
\label{s:controlled-processes}

We now extend the setting of Section \ref{s:processes} by allowing the kernels \eqref{kernel} to depend on control variab\-les~$u_t$.

\subsection{The Model}
\label{ss:controlled-stochastic}

We still work with the process $\{X_t\}_{t=1 ,\dots, T}$ on the space $\Xc^T$ and introduce a Borel control space $\Uc$. At each time $t$, we observe the state $x_t$ and then apply a control $u_t \in \Uc $.  We assume that the admissible control sets and the transition kernels (conditional distributions of the next state) depend on all currently-known state  and control values. More precisely, we make the
following assumptions:
\begin{tightenumerate}{1}
	\item For all $t=1,\dots, T$, we require that $u_t\in \Uc_t(x_1,u_1,\dots,x_{t-1},u_{t-1},x_t)$, where  $\Uc_t : \Gc_t \rightrightarrows \Uc$  is a measurable multifunction, and  $\Gc_1, \dots, \Gc_T$ are the sets of histories of all currently-known state and control values before applying each control:
	$$\left\{
	\begin{aligned}
	& \Gc_1=\Xc,\\
	& \Gc_{t+1} = \graph(\Uc_t) \times \Xc \subseteq (\Xc \times \Uc)^t \times \Xc,\quad t=1,\dots,T-1;
	\end{aligned}
	\right.
	$$
	\item For all $t=1,\dots, T$, the control-dependent transition kernels
	\begin{equation}
	\label{kernel2}
	Q_t: \graph(\Uc_t) \rightarrow \Pc(\Xc),\quad t=1,\dots,T-1,
	\end{equation}
	are measurable, and for all $t=1,\dots,T-1$, for all $(x_1,u_1,\dots,x_t,u_t) \in \graph(\Uc_t)$, $Q_t(x_1,u_1,\dots,x_t,u_t)$ describes the conditional distribution of $X_{t+1}$, given currently-known states and controls.
\end{tightenumerate}

For this controlled process, a (deterministic) \emph{history-dependent admissible policy} $\pi=(\pi_1,\dots,\pi_T)$ is a sequence of measurable selectors, called \emph{decision rules}, $\pi_t: \Gc_t \rightarrow \Uc$ such that $\pi_t(g_t) \in \Uc_t(g_t)$ for all    $g_t \in \Gc_t$. We can easily prove by induction on $t$ that for  an admissible policy $\pi$ each $\pi_t$ reduces to a measurable function on $\Xc^t$, as $u_s = \pi_s(h_s)$ for all $s=1 ,\dots, t-1$. We are still using $\pi_s$ to denote the decision rule, although it is a different function, formally; it will not lead to any misunderstanding. The set  of admissible policies is
\begin{equation}
\label{varPi}
\begin{aligned}
\varPi := & \ \big\{ \, \pi = (\pi_1,\dots,\pi_T) \, | \,\\
&\quad \forall t, \; \pi_t(x_1,\dots,x_t) \in \Uc_t(x_1,\pi_1(x_1),\dots,x_{t-1},\pi_{t-1}(x_1,\dots,x_{t-1}), x_t) \,\big\}.
\end{aligned}
\end{equation}

 For any fixed policy $\pi \in \varPi$, the transition kernels can be rewritten as measurable functions from $\Xc^t$ to $\Pc(\Xc)$:
\begin{equation}
	\label{kernel3}
	Q_t^\pi: (x_1 ,\dots, x_t) \mapsto Q_t\big(x_1,\pi_1(x_1) ,\dots, x_t, \pi_t(x_1 ,\dots, x_t)\big), \; t = 1 ,\dots, T-1,
\end{equation}
just like the transition kernels of the uncontrolled case given in \eqref{kernel}, but indexed by $\pi$. Thus, for any policy $\pi \in \varPi$, we can consider $\{X_t\}_{t = 1 ,\dots, T}$ as an ``uncontrolled'' process, on the probability space $\big( \Xc^T, \Bc(\Xc)^T, P^\pi \big)$ with $P^\pi$ defined by $\{Q_t^\pi\}_{t=1 ,\dots, T-1}$. The process $\{X_t\}$ is adapted to the policy-independent filtration $\{\Fc_t\}_{t = 1 ,\dots, T}$. As before and throughout this paper, $h_t \in \Xc^t$  stands for $(x_1 ,\dots, x_t)$.

We still use the same spaces $\Zc_t$, $t= 1 ,\dots, T$, as defined in \eqref{Zt} for the costs incurred at each stage; these spaces also allow us to consider control-dependent costs as collections of policy-indexed costs in $\Zc_{1,T}$.
Thus, we are able to define and analyze (time-consistent) dynamic risk measures $\rho^\pi$ for each fixed $\pi \in \varPi$, as in Section~\ref{s:processes}. Note that $\rho^\pi$ are defined on the same spaces independently of $\pi$,
 because the filtration and the spaces $\Zc_t, \, t= 1 ,\dots, T$, are not dependent on $\pi$; however, we do need to index the measures of risk by the policy $\pi$,
because the transition kernels and, consequently, the probability measure on the space $\Xc^T$, depend on $\pi$.

\subsection{Stochastic Conditional Time Consistency and Transition Risk Mappings}

 We need to compare risk levels among different policies,
 so a meaningful order among the risk measures $\rho^\pi$, with ${\pi \in \varPi}$, is needed. It turns out that our concept of
stochastic conditional time-consistency can be extended to this setting.

\begin{definition}
	\label{tc-controlled}
A family of process-based dynamic risk measures $\big\{\rho^\pi_{t,T}\big\}_{t=1,\dots,T-1}^{\pi \in \varPi}$ is  \emph{stochastically conditionally time-consistent} if for any $\pi,\pi' \in \varPi$, for any $1 \le t < T$, for all $h_t \in \Xc^t$,
all $(Z_t,\dots,Z_T)\in \Zc_{t,T}$
and all $ (W_t,\dots,W_T)\in \Zc_{t,T}$, the conditions
\[
\left\{ \begin{aligned}
&Z_t(h_t)=W_t(h_t),\\
&\big( \rho^\pi_{t+1,T}(Z_{t+1},\dots,Z_T)\mid H^\pi_t=h_t \big) \preceq_{\text{\rm st}} \big( \rho^{\pi'}_{t+1,T}(W_{t+1},\dots,W_T)\mid H_t^{\pi'}=h_t \big),
\end{aligned} \right.
\]
 imply
\[
\rho^\pi_{t,T}(Z_t,\dots,Z_T)(h_t) \le \rho^{\pi'}_{t,T}(W_t,\dots,W_T)(h_t).
\]
\end{definition}

\begin{remark}
    As in Definition \ref{sc-tc}, the conditional stochastic order ``$\preceq_{\text{\rm st}}$''
    is understood as follows: for all $\eta \in \R$ we have
    \[
   \ Q_t^\pi(h_t)\Big( \big\{ \, x \, | \, \rho^\pi_{t+1,T}(Z_{t+1},\dots,Z_T)(h_t,x)> \eta \, \big\} \Big)
    \le
    Q_t^{\pi'}(h_t)\Big( \big\{ \, x \, | \, \rho^{\pi'}_{t+1,T}(W_{t+1},\dots,W_T)(h_t,x)> \eta \, \big\} \Big).
    \]
\end{remark}

This definition helps us build a connection among dynamic risk measures $\rho^\pi$, for ${\pi \in \varPi}$, as we explain it below. Before passing to the details,
we can say in short that the same transition risk mappings as in the uncontrolled case
are the only possible structures of such risk measures.

 If a family of process-based dynamic risk measures $\big\{\rho^\pi_{t,T}\big\}_{t=1,\dots,T-1}^{\pi \in \varPi}$ is stochastically conditionally time-consistent, then for each fixed $\pi\in\varPi$ the process-based dynamic risk measure $\big\{\rho^\pi_{t,T}\big\}_{t=1,\dots,T-1}$ is  stochastically conditionally time-consistent, as defined in Definition \ref{sc-tc}. By virtue of Proposition \ref{prop-li-risk-trans-map}, for each $\pi\in\varPi$, there exist functionals
$\sigma^\pi_t: \text{\rm graph}(Q^\pi_t) \times \Vc \to {\R}$, $t=1\dots T-1$,
such that for all $t=1,\dots,T-1$, all $h_t \in \Xc^t$, the functional $\sigma_t^\pi(h_t,Q^\pi_t(h_t),\,\cdot\,)$ is a law-invariant risk measure on ${\Vc}$ with respect to the distribution $Q^\pi_t(h_t)$ and
\[
\rho^\pi_{t,T}(Z_t,\dots,Z_T)(h_t)=Z_t(h_t)+\sigma^\pi_t\big(h_t,Q^\pi_t(h_t),\rho^\pi_{t+1,T}(Z_{t+1},\dots,Z_T)(h_t,\cdot)\big),
\quad \forall h_t \in \Xc^t.
\]
Consider any $\pi,\pi' \in \varPi$, $h_t\in \Xc^t$, and $(Z_t,\dots,Z_T)\in \Zc_{t,T}$, $(W_t,\dots,W_T) \in \Zc_{t,T}$ such that
\[
\left\{
    \begin{aligned}
    & Z_t(h_t)=W_t(h_t),\\
    & Q^\pi_t(h_t)=Q^{\pi'}_t(h_t),\\
    & \rho^\pi_{t+1,T}(Z_{t+1},\dots,Z_T)(h_t,\cdot) = \rho^{\pi'}_{t+1,T}(W_{t+1},\dots,W_T)(h_t,\cdot).
    \end{aligned}
\right.
\]
Then we have
\[
\big( \rho^\pi_{t+1,T}(Z_{t+1},\dots,Z_T)\mid H^\pi_t=h_t \big) \sim_{\text{\rm st}} \big( \rho^{\pi'}_{t+1,T}(W_{t+1},\dots,W_T)\mid H_t^{\pi'}=h_t \big),
\]
where the relation $\sim_{\text{\rm st}}$ means that both $\preceq_{\text{\rm st}}$ and $\succeq_{\text{\rm st}}$ are true; in other words, equality in law.
Because of the stochastic conditional time-consistency,
\[
\rho^\pi_{t,T}(Z_t,\dots,Z_T)(h_t) = \rho^{\pi'}_{t,T}(W_t,\dots,W_T)(h_t),
\]
whence
\[
\sigma^\pi_t\big(h_t,Q^\pi_t(h_t),\rho^\pi_{t+1,T}(Z_{t+1},\dots,Z_T)(h_t,\cdot)\big)
=\sigma^{\pi'}_t\big(h_t,Q^{\pi'}_t(h_t),\rho^{\pi'}_{t+1,T}(W_{t+1},\dots,W_T)(h_t,\cdot)\big).
\]
All three arguments of $\sigma^\pi_t$ and $\sigma^{\pi'}_t$ are identical. Consequently, $\sigma^\pi$ does not depend on $\pi$ directly, and all dependence on $\pi$ is carried by the controlled kernel $Q_t^\pi$. This is a highly desirable property, when we apply dynamic risk measures to a control problem.
We summarize this important observation in the following theorem, which extends Theorem \ref{prop-li-risk-trans-map} to the case of controlled processes.

\begin{theorem}
\label{th-contr-li-risk-trans-map}
A family of process-based dynamic risk measures $\big\{\rho_{t,T}^\pi\big\}_{t=1,\dots,T}^{\pi \in \varPi}$ is trans\-lation-invariant and stochastically conditionally time-consistent if and only if there exist functionals
\[
\sigma_t: \bigg\{\bigcup_{\pi \in \varPi} \text{\rm graph}(Q_t^\pi)\bigg\} \times \Vc \to {\R}, \quad t=1\dots T-1,
\]
such that:
\begin{tightlist}{ii}
	\item For all $t=1,\dots,T-1$ and all $h_t \in \Xc^t$, $\sigma_t(h_t,\cdot,\cdot)$ is normalized and
	has the following  property of strong monotonicity with respect to stochastic dominance:
	\begin{multline*}
	\forall q^1,q^2 \in \big\{ \, Q_t^\pi(h_t):\pi \in \varPi \, \big\}, \;
	 \forall v^1,v^2 \in \Vc, \;\\
	(v^1;q^1) \preceq_{\text{st}} (v^2;q^2) \implies \sigma_t(h_t,q^1,v^1) \le \sigma_t(h_t,q^2,v^2),
	\end{multline*}
	where $(v;q) = q \circ v^{-1}$ means ``the distribution of $v$ under $q$;''
	\item For all $\pi \in \varPi$, for all $t=1,\dots,T-1$, for all $(Z_t,\dots,Z_T)\in {\Zc}_{t,T}$, and for all $h_t \in \Xc^t$,
	\begin{equation}
	\label{li-risk-trans-map-controlled} \rho^\pi_{t,T}(Z_t,\dots,Z_T)(h_t)=Z_t(h_t)+\sigma_t(h_t,Q_t^\pi(h_t),\rho_{t+1,T}^\pi(Z_{t+1},\dots,Z_T)(h_t,\cdot)).
	\end{equation}
\end{tightlist}
Moreover, for all $t=1,\dots,T-1$, $\sigma_t$ is uniquely determined by $\rho_{t,T}$ as follows:  for every $h_t \in \Xc^t$, for every $q \in \big\{ \, Q_t^\pi(h_t):\pi \in \varPi \, \big\}$, and for every $v\in \Vc$,
\begin{equation}
\label{sigma3}
\sigma_t(h_t,q,v)=\rho_{t,T}^\pi(0,V,0,\dots,0)(h_t),
\end{equation}
where $\pi$ is any admissible policy such that $q=Q_t^\pi(h_t)$, and $V\in \Zc_{t+1}$ satisfies the equation $V(h_t,\cdot) = v(\cdot)$, and can be arbitrary elsewhere.
\end{theorem}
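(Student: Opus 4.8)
The plan is to reduce Theorem~\ref{th-contr-li-risk-trans-map} to the uncontrolled Theorem~\ref{prop-li-risk-trans-map} applied for each fixed policy, together with the ``gluing'' argument already sketched in the paragraphs preceding the statement. Concretely, for the ``only if'' direction I would first observe that if the family $\big\{\rho^\pi_{t,T}\big\}$ is stochastically conditionally time-consistent in the sense of Definition~\ref{tc-controlled}, then taking $\pi=\pi'$ shows each individual $\big\{\rho^\pi_{t,T}\big\}_{t=1,\dots,T}$ is stochastically conditionally time-consistent with respect to $\{Q^\pi_t\}$ in the sense of Definition~\ref{sc-tc}; being also translation-invariant, Theorem~\ref{prop-li-risk-trans-map} supplies for each $\pi$ functionals $\sigma^\pi_t$ on $\graph(Q^\pi_t)\times\Vc$ with the representation \eqref{li-risk-trans-map} and the uniqueness formula \eqref{sigma2}. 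The crux is then to show $\sigma^\pi_t(h_t,q,v)$ depends on $\pi$ only through $q=Q^\pi_t(h_t)$: given $\pi,\pi'$ with $Q^\pi_t(h_t)=Q^{\pi'}_t(h_t)=q$, pick $V\in\Zc_{t+1}$ with $V(h_t,\cdot)=v$ (zero elsewhere); then $\rho^\pi_{t+1,T}(V,0,\dots,0)(h_t,\cdot)=v=\rho^{\pi'}_{t+1,T}(V,0,\dots,0)(h_t,\cdot)$ by translation invariance and normalization of $\rho_{t+1,T}$, and since these functions have the same (trivially equal) conditional law under the common kernel $q$, Definition~\ref{tc-controlled} applied in both directions ($\preceq_{\text{st}}$ and $\succeq_{\text{st}}$) forces $\rho^\pi_{t,T}(0,V,0,\dots,0)(h_t)=\rho^{\pi'}_{t,T}(0,V,0,\dots,0)(h_t)$; by \eqref{sigma2} this is exactly $\sigma^\pi_t(h_t,q,v)=\sigma^{\pi'}_t(h_t,q,v)$. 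Hence a single well-defined $\sigma_t$ on $\bigl\{\bigcup_\pi\graph(Q^\pi_t)\bigr\}\times\Vc$ exists, yielding \eqref{li-risk-trans-map-controlled} and the uniqueness formula \eqref{sigma3}.

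Next I would establish property (i), strong monotonicity with respect to stochastic dominance. Fix $h_t$, take $q^1=Q^{\pi}_t(h_t)$, $q^2=Q^{\pi'}_t(h_t)$ and $v^1,v^2\in\Vc$ with $(v^1;q^1)\preceq_{\text{st}}(v^2;q^2)$. Choose $V,W\in\Zc_{t+1}$ with $V(h_t,\cdot)=v^1$, $W(h_t,\cdot)=v^2$, zero elsewhere; then $\rho^\pi_{t+1,T}(V,0,\dots,0)(h_t,\cdot)=v^1$ and $\rho^{\pi'}_{t+1,T}(W,0,\dots,0)(h_t,\cdot)=v^2$, so the conditional-stochastic-order hypothesis of Definition~\ref{tc-controlled} holds for the pair $(0,V,0,\dots,0)$ under $\pi$ versus $(0,W,0,\dots,0)$ under $\pi'$ (the $Z_t=W_t$ condition is satisfied since both are $0$ at $h_t$). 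Time-consistency then gives $\rho^\pi_{t,T}(0,V,0,\dots,0)(h_t)\le\rho^{\pi'}_{t,T}(0,W,0,\dots,0)(h_t)$, i.e.\ $\sigma_t(h_t,q^1,v^1)\le\sigma_t(h_t,q^2,v^2)$ by \eqref{sigma3}. Normalization of $\sigma_t(h_t,\cdot,\cdot)$ follows from normalization of each $\sigma^\pi_t$ (equivalently, from $\rho^\pi_{t,T}(0,\dots,0)=0$). Note that strong monotonicity as stated is genuinely stronger than the per-$\pi$ monotonicity and law-invariance coming out of Theorem~\ref{prop-li-risk-trans-map}; it is precisely the extra content that Definition~\ref{tc-controlled} (comparing \emph{across} policies) buys us, and it is the reason the theorem is not a triviality.

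For the ``if'' direction, suppose functionals $\sigma_t$ with properties (i)--(ii) are given and define $\rho^\pi_{t,T}$ by the recursion \eqref{li-risk-trans-map-controlled} with terminal condition $\rho^\pi_{T,T}(Z_T)=Z_T$. First check each $\rho^\pi_{t,T}$ is a normalized, translation-invariant conditional risk measure: monotonicity follows by backward induction on $t$ using monotonicity of $\sigma_t(h_t,q,\cdot)$ for fixed $q$ (which is a special case of (i) with $q^1=q^2$); normalization and translation invariance are immediate from the form of \eqref{li-risk-trans-map-controlled} and normalization of $\sigma_t$, exactly as in the last part of the proof of Theorem~\ref{prop-li-risk-trans-map}. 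For stochastic conditional time-consistency across policies, assume the hypotheses of Definition~\ref{tc-controlled}: $Z_t(h_t)=W_t(h_t)$ and $\big(\rho^\pi_{t+1,T}(Z_{t+1},\dots,Z_T)\mid H^\pi_t=h_t\big)\preceq_{\text{st}}\big(\rho^{\pi'}_{t+1,T}(W_{t+1},\dots,W_T)\mid H^{\pi'}_t=h_t\big)$; unwinding the definition of $\preceq_{\text{st}}$, this says the distribution of $v^1:=\rho^\pi_{t+1,T}(Z_{t+1},\dots,Z_T)(h_t,\cdot)$ under $q^1:=Q^\pi_t(h_t)$ is dominated by that of $v^2:=\rho^{\pi'}_{t+1,T}(W_{t+1},\dots,W_T)(h_t,\cdot)$ under $q^2:=Q^{\pi'}_t(h_t)$, i.e.\ $(v^1;q^1)\preceq_{\text{st}}(v^2;q^2)$. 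Apply property~(i) to get $\sigma_t(h_t,q^1,v^1)\le\sigma_t(h_t,q^2,v^2)$, and add $Z_t(h_t)=W_t(h_t)$ to both sides; by \eqref{li-risk-trans-map-controlled} this is precisely $\rho^\pi_{t,T}(Z_t,\dots,Z_T)(h_t)\le\rho^{\pi'}_{t,T}(W_t,\dots,W_T)(h_t)$, as required.

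I expect the main obstacle to be purely bookkeeping: making sure that in the ``only if'' direction the two uses of Theorem~\ref{prop-li-risk-trans-map} (for $\pi$ and for $\pi'$) produce functionals that actually \emph{coincide} on the overlap of their domains $\graph(Q^\pi_t)\cap\graph(Q^{\pi'}_t)$, so that the union $\sigma_t$ is well-defined — this is the argument in the displayed paragraph above and hinges on using Definition~\ref{tc-controlled} in both directions to turn an equality-in-law into an equality of risk values. A secondary subtlety is verifying that when $q^1=q^2$ strong monotonicity specializes correctly to recover ordinary monotonicity and law-invariance of $\sigma_t(h_t,q,\cdot)$, so that the ``if'' direction really does fall under the hypotheses of (the per-$\pi$) Theorem~\ref{prop-li-risk-trans-map}; but this is a one-line check once (i) is in place. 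No deep analysis is needed beyond what is already in the proof of Theorem~\ref{prop-li-risk-trans-map}.
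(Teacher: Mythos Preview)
Your proposal is correct and follows essentially the same approach as the paper: the paper's proof is extremely terse (two sentences), relying on the discussion immediately preceding the theorem for the per-$\pi$ application of Theorem~\ref{prop-li-risk-trans-map} and the gluing argument, and then invoking \eqref{sigma3} together with Definition~\ref{tc-controlled} to verify strong monotonicity---exactly the steps you spell out in detail. Your treatment of the ``if'' direction is actually more explicit than the paper's (which omits it entirely), but the argument you give is the natural one and is correct.
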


\begin{proof}
	We have shown the existence of $\{\sigma_t\}_{t=1 ,\dots, T}$ satisfying \eqref{li-risk-trans-map-controlled} and \eqref{sigma3} in the discussion preceding the theorem. We can verify the strong law-invariance by \eqref{sigma3} and Definition \ref{tc-controlled}.
  \end{proof}

It follows that the transition risk mappings of Examples \ref{e:entropic},  \ref{e:semi},
 and \ref{e:avar-sigma} are perfectly suitable transition risk mappings for controlled processes as well,
 provided that the corresponding parameters ($\gamma$, $\varkappa$, and $\alpha$)
 depend on $t$ and $x_t$ only.

\section{Application to Controlled Markov Systems}
\label{s:Markov}

Our results can be further specialized to the case when $\{X_t\}$ is a controlled Markov system, in which we assume the following conditions:
\begin{tightlist}{i}
	\item The admissible control sets are measurable multifunctions of the current state, i.e.,
	$\Uc_t: \Xc \rightrightarrows \Uc, \quad t = 1 ,\dots, T;$
	\item The dependence in the transition kernel \eqref{kernel2} on the history is carried only through the last state and control:
	$Q_t: \graph(\Uc_t) \rightarrow \Pc(\Xc)$, $t=1,\dots,T-1$;
	\item The step-wise costs are  dependent only on the current state and control:
	$Z_t=c_t(x_t,u_t)$, $t=1,\dots,T$,
	where $c_t:\graph(\Uc_t) \rightarrow \R, \, t=1,\dots,T$ are measurable bounded functions.
\end{tightlist}

Let $\varPi$ be the set of admissible \emph{history-dependent policies}:
\begin{equation*}
\varPi := \big\{ \, \pi = (\pi_1,\dots,\pi_T) \, | \, \forall t, \; \pi_t(x_1,\dots,x_t) \in \Uc_t(x_t) \,\big\}.
\end{equation*}
To alleviate notation, for all $\pi \in \varPi$ and for all measurable $c=(c_1, \dots, c_T)$, we write
\begin{equation*}
v_t^{c,\pi}(h_t) := \rho^\pi_{t,T}\big(c_t(X_t,\pi_t(H_t)),\dots,c_T(X_T,\pi_T(H_T))\big)(h_t).
\end{equation*}

The following result is a direct consequence of Theorem \ref{th-contr-li-risk-trans-map} in the Markovian case.
\begin{corollary}
	\label{th-markov1}
	For a controlled Markov system, a family of process-based dynamic risk measures $\big\{\rho_{t,T}^\pi\big\}_{t=1,\dots,T}^{\pi \in \varPi}$ is  translation-invariant  and stochastically conditionally time-consistent if and only if  functionals
	\[
	\sigma_t: \left\{ \big(h_t,Q_t(x_t,u)\big) : h_t \in \Xc^t, u \in \Uc_t(x_t) \right\} \times \Vc \to {\R}, \quad t=1\dots T-1,
	\]
	exist, such that
	\begin{tightlist}{ii}
		\item For all $t=1,\dots,T-1$ and all $h_t \in \Xc^t$, $\sigma_t(h_t,\cdot,\cdot)$ is normalized and strongly monotonic with respect to stochastic dominance on $\big\{ \, Q_t(x_t,u):u \in \Uc_t(x_t) \, \big\}$;
		\item For all $\pi \in \varPi$, for all bounded measurable $c$, for all $t=1,\dots,T-1$, and for all $h_t \in \Xc^t$,
		\begin{equation}
		\label{li-risk-trans-map-markov1} v^{c,\pi}_t(h_t)=c_t(x_t,\pi_t(h_t))+\sigma_t\bigg(h_t,Q_t(x_t,\pi_t(h_t)),v^{c,\pi}_{t+1}(h_t,\cdot)\bigg).
		\end{equation}
	\end{tightlist}
\end{corollary}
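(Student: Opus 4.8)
The plan is to deduce the corollary from Theorem \ref{th-contr-li-risk-trans-map} by specializing the general controlled setting to the Markovian assumptions (i)--(iii). First I would observe that, under (ii), for every admissible policy $\pi\in\varPi$ the policy-dependent kernel \eqref{kernel3} simplifies to $Q_t^\pi(h_t)=Q_t(x_t,\pi_t(h_t))$, so that it depends on $h_t$ only through $x_t$ and the control $\pi_t(h_t)\in\Uc_t(x_t)$. Hence
\[
\bigcup_{\pi\in\varPi}\graph(Q_t^\pi)=\big\{\big(h_t,Q_t(x_t,u)\big):h_t\in\Xc^t,\ u\in\Uc_t(x_t)\big\},
\]
and likewise, for each fixed $h_t$, the set of reachable one-step distributions $\big\{Q_t^\pi(h_t):\pi\in\varPi\big\}$ coincides with $\big\{Q_t(x_t,u):u\in\Uc_t(x_t)\big\}$. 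The nontrivial direction of this identity is that every $u\in\Uc_t(x_t)$ is attained by some $\pi\in\varPi$: given $h_t$ and such a $u$, I would build an admissible policy that plays $u$ at stage $t$ on the history $h_t$ and uses arbitrary measurable selectors of $\Uc_s(\cdot)$ elsewhere, which is possible because under (i) the constraint multifunctions depend only on the current state, so measurable selectors exist and can be patched together without creating feasibility conflicts.

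Next I would apply Theorem \ref{th-contr-li-risk-trans-map} essentially verbatim. It provides functionals $\sigma_t$ on $\big\{\bigcup_{\pi}\graph(Q_t^\pi)\big\}\times\Vc$, which, by the display above, is exactly the domain claimed in the corollary. Item (i) of the theorem --- normalization and strong monotonicity of $\sigma_t(h_t,\cdot,\cdot)$ with respect to stochastic dominance on $\big\{Q_t^\pi(h_t):\pi\in\varPi\big\}$ --- translates directly into item (i) of the corollary once the two descriptions of that set are identified. For item (ii), I would substitute the Markovian costs $Z_t=c_t(X_t,\pi_t(H_t))$ into \eqref{li-risk-trans-map-controlled}: by definition of $v_t^{c,\pi}$ the left-hand side is $v_t^{c,\pi}(h_t)$, the first term on the right is $c_t(x_t,\pi_t(h_t))$, and $\rho_{t+1,T}^\pi(Z_{t+1},\dots,Z_T)(h_t,\cdot)=v_{t+1}^{c,\pi}(h_t,\cdot)$, which is precisely \eqref{li-risk-trans-map-markov1}. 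The converse implication is equally direct: given such $\sigma_t$ --- whose domain, by the displayed identity, is exactly $\big\{\bigcup_\pi\graph(Q_t^\pi)\big\}\times\Vc$ --- Theorem \ref{th-contr-li-risk-trans-map} applies and yields that $\big\{\rho_{t,T}^\pi\big\}$ is translation-invariant and stochastically conditionally time-consistent; the uniqueness of $\sigma_t$ is inherited from \eqref{sigma3} with $q=Q_t(x_t,u)$.

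The only point that calls for genuine care, rather than bookkeeping, is the surjectivity claim $\big\{Q_t^\pi(h_t):\pi\in\varPi\big\}=\big\{Q_t(x_t,u):u\in\Uc_t(x_t)\big\}$, that is, the existence of admissible history-dependent policies realizing a prescribed control at a prescribed history while remaining admissible at all other histories. Once this is settled, the remainder of the argument is a mechanical rewriting of Theorem \ref{th-contr-li-risk-trans-map} under assumptions (i)--(iii), with no new analytic content.
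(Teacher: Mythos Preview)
Your approach is the same as the paper's---specialize Theorem~\ref{th-contr-li-risk-trans-map} to the Markov assumptions---and the domain identification and surjectivity argument you give are correct and fill in details the paper leaves implicit.

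There is one point, however, where you pass too quickly. For the converse you write that ``Theorem~\ref{th-contr-li-risk-trans-map} applies,'' but that theorem's converse requires the recursion \eqref{li-risk-trans-map-controlled} to hold for \emph{all} $(Z_t,\dots,Z_T)\in\Zc_{t,T}$, while the corollary's hypothesis~(ii) asserts \eqref{li-risk-trans-map-markov1} only for cost sequences of the Markov form $Z_s=c_s(X_s,\pi_s(H_s))$. A general $Z_s\in\Zc_s$ may depend on the full history $(x_1,\dots,x_s)$ and need not be of this form, so the hypotheses of Theorem~\ref{th-contr-li-risk-trans-map} are not met directly. This is precisely the issue the paper's one-line proof singles out: one verifies that the identifying formula \eqref{sigma3} is already recoverable from \eqref{li-risk-trans-map-markov1} alone. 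Concretely, given $h_t$, $u\in\Uc_t(x_t)$, and $v\in\Vc$, pick any $\pi$ with $\pi_t(h_t)=u$ and take the Markov cost $c=(0,\dots,0,c_{t+1},0,\dots,0)$ with $c_{t+1}(x',u'):=v(x')$; by normalization and backward induction $v^{c,\pi}_{t+1}(h_t,\cdot)=v(\cdot)$, and \eqref{li-risk-trans-map-markov1} at time~$t$ yields $\rho^\pi_{t,T}(0,V,0,\dots,0)(h_t)=\sigma_t\big(h_t,Q_t(x_t,u),v\big)$ with $V(h_t,\cdot)=v(\cdot)$, which is \eqref{sigma3}. Your proposal should include this step rather than invoke Theorem~\ref{th-contr-li-risk-trans-map} as if its hypothesis were already in hand.
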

\begin{proof}
	To verify the ``if and only if'' statement, we can show that \eqref{sigma3} is true if $\sigma_t$ satisfies \eqref{li-risk-trans-map-markov1} for all measurable bounded $c$.
  \end{proof}

\subsection{Markov Risk Measures}
\label{ss:markov-risk}

Consider a Markov policy $\pi$  composed of state-dependent measurable decision rules $\pi_t:\Xc \mapsto \Uc $, $t=1 ,\dots, T$.
Because of the Markov property of the transition kernels, for a Markov policy $\pi$, the future evolution of the process $\{X_\tau\}_{\tau=t ,\dots, T}$ is solely dependent on the current state $x_t$, so is the distribution of the future costs $c_\tau(X_\tau,\pi_\tau(X_\tau))$, $\tau=t,\dots,T$.
Therefore, it is reasonable to assume that the dependence of the conditional risk measure on the history
is also carried  by the current state only.

\begin{definition}
 \label{d:markov-risk}
 A family of process-based dynamic risk measures $\big\{\rho_{t,T}^\pi\big\}_{t=1,\dots,T}^{\pi \in \varPi}$ for a controlled Markov system is \emph{Markov} if for all Markov policies $\pi \in \varPi$, for all measurable $c=(c_1,\dots,c_T)$, and for all $h_t=(x_1,\dots,x_t)$ and $h_t'=(x_1',\dots,x_t')$ in $\Xc^t$ such that $x_t=x_t'$, we have
$v^{c,\pi}_t(h_t)=v^{c,\pi}_t(h_t')$.
\end{definition}

\begin{proposition}
	\label{mless-sigma-mdp}
	Under translation invariance and  stochastic conditional time consistency, $\big\{\rho_{t,T}^\pi\big\}_{t=1,\dots,T}^{\pi \in \varPi}$ is Markov if and only if the dependence of $\sigma_t$ on $h_t$ is carried only by $x_t$, for all $t=1 ,\dots, T-1$.
\end{proposition}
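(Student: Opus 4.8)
The plan is to derive both implications from the representation already established in Corollary~\ref{th-markov1}: under translation invariance and stochastic conditional time consistency there are unique functionals $\sigma_t$, $t=1,\dots,T-1$, for which \eqref{li-risk-trans-map-markov1} holds for every admissible $\pi$ and every bounded measurable $c$. Everything is read off this single identity together with normalization and translation invariance, which (as noted after Definition~\ref{basicProperties}) make $\rho^\pi_{T,T}$ the identity map and collapse $\rho^\pi_{s,T}(W,0,\dots,0)$ to $W$. Throughout, ``$\sigma_t$ depends on $h_t$ only through $x_t$'' is understood as: $\sigma_t(h_t,q,v)=\sigma_t(h_t',q,v)$ whenever $h_t,h_t'$ share their last coordinate and both $(h_t,q),(h_t',q)$ lie in the domain of $\sigma_t$.

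For the ``if'' direction I would assume this dependence and write $\sigma_t(x_t,q,v)$. Fix a Markov policy $\pi$ and a bounded measurable $c$, and prove by backward induction on $t$ that $v^{c,\pi}_t(h_t)$ depends on $h_t$ only through $x_t$. The base case $t=T$ is immediate, since $\rho^\pi_{T,T}$ is the identity and $\pi$ is Markov, so $v^{c,\pi}_T(h_T)=c_T(x_T,\pi_T(x_T))$. For the step, if $v^{c,\pi}_{t+1}(h_{t+1})=w(x_{t+1})$ for a fixed $w\in\Vc$ not depending on the earlier coordinates, then $v^{c,\pi}_{t+1}(h_t,\cdot)=w$ as an element of $\Vc$, \emph{independently of $h_t$}, and \eqref{li-risk-trans-map-markov1} gives $v^{c,\pi}_t(h_t)=c_t(x_t,\pi_t(x_t))+\sigma_t\big(x_t,Q_t(x_t,\pi_t(x_t)),w\big)$, a function of $x_t$ alone. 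This is exactly the Markov property of Definition~\ref{d:markov-risk}.

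For the ``only if'' direction I would fix $t$, histories $h_t=(x_1,\dots,x_t)$ and $h_t'=(x_1',\dots,x_t')$ with $x_t=x_t'$, a control $u\in\Uc_t(x_t)$, and an arbitrary $v\in\Vc$, and aim to show $\sigma_t(h_t,Q_t(x_t,u),v)=\sigma_t(h_t',Q_t(x_t,u),v)$. The idea is to engineer a Markov policy and costs that make the right-hand side of \eqref{li-risk-trans-map-markov1} equal exactly to $\sigma_t(h_t,Q_t(x_t,u),v)$. Concretely: for $s\neq t$ take any measurable selector of $\Uc_s$, and for $s=t$ take a measurable selector of $\Uc_t$ modified to equal $u$ at the single point $x_t$ (still a measurable admissible selector, since $\{x_t\}$ is Borel and $u\in\Uc_t(x_t)$), so that $\pi$ is Markov with $\pi_t(x_t)=\pi_t(x_t')=u$; and set $c_s\equiv0$ for $s\neq t+1$ and $c_{t+1}(x,u')=v(x)$ for all $(x,u')$ (bounded and measurable as $v\in\Vc$). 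Then translation invariance and normalization give $v^{c,\pi}_{t+1}(h_t,\cdot)=v$ and likewise $v^{c,\pi}_{t+1}(h_t',\cdot)=v$, and \eqref{li-risk-trans-map-markov1} with $c_t\equiv0$ yields $v^{c,\pi}_t(h_t)=\sigma_t(h_t,Q_t(x_t,u),v)$ and $v^{c,\pi}_t(h_t')=\sigma_t(h_t',Q_t(x_t,u),v)$. Since $\pi$ is Markov and $x_t=x_t'$, the Markov property forces the two left-hand sides to coincide; as $u$ and $v$ were arbitrary, this is the claim.

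I expect the only delicate point to be this construction in the ``only if'' part: producing a policy that is simultaneously Markov, admissible, and prescribed to take the value $u$ at the state $x_t$ (handled by the single-point modification of a measurable selector), and checking that the tail value $v^{c,\pi}_{t+1}(h_t,\cdot)$ can be forced to equal the given $v$ independently of the past (handled by taking $c_{t+1}$ constant in the control and zeroing out all later costs, so that $\rho^\pi_{t+1,T}$ reduces to evaluation of $c_{t+1}$). Once these are in place, the equivalence follows directly from the recursion \eqref{li-risk-trans-map-markov1}.
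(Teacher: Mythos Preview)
Your proposal is correct and follows essentially the same approach as the paper: for the ``only if'' direction you construct a Markov policy with $\pi_t(x_t)=u$ and choose $c=(0,\dots,0,c_{t+1},0,\dots,0)$ with $c_{t+1}(x,u')=v(x)$ to read off $\sigma_t(h_t,Q_t(x_t,u),v)=v_t^{c,\pi}(h_t)$, and for the ``if'' direction you do the backward induction on \eqref{li-risk-trans-map-markov1}. You are simply more explicit than the paper about why such a Markov policy exists (the single-point modification of a measurable selector) and why $v^{c,\pi}_{t+1}(h_t,\cdot)=v$.
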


\begin{proof}
Suppose $\big\{\rho_{t,T}^\pi\big\}_{t=1,\dots,T}^{\pi \in \varPi}$ is Markov. For all $t=1 ,\dots, T-1$, for all $h_t,h_t' \in \Xc^t$ such that $x_t=x_t'$, for all $u \in \Uc_t(x_t)$ and for all $v \in \Vc$, there exists a Markov $\pi \in \varPi$ such that $\pi_t(x_t)=u$. By setting $c=(0 ,\dots, 0,c_{t+1}, 0 ,\dots, 0)$ with $c_{t+1}: (x',u') \mapsto v(x')$, the Markov property of $\rho^\pi$ implies that
\[
\sigma_t(h_t,Q_t(x_t,u),v)=v^{c,\pi}_t(h_t)=v^{c,\pi}_t(h_t')=\sigma_t(h_t',Q_t(x_t,u),v).
\]
Therefore, $\sigma_t$ is indeed memoryless, that is, its dependence on $h_t$ is carried by $x_t$ only.
	
	Suppose $\sigma_t, \, t=1 ,\dots, T-1$, are all memoryless. We prove by induction backward in time that for all $t=T ,\dots, 1$, $v^{c,\pi}_t(h_t)=v^{c,\pi}_t(h_t')$ for all Markov $\pi$ and all $h_t,h_t' \in \Xc^t$ such that $x_t=x_t'$. For $t=T$ we have:
$v_T^{c,\pi}(h_T) = c_T(x_T,\pi_T(x_T))=v_T^{c,\pi}(h'_T)$.
We can just write it as $v_T^{c,\pi}(x_T)$. If this relation is true for some $t+1 \le T$,
then for $t$ we obtain
\begin{align*}
v^{c,\pi}_t(h_t) &= c_t(x_t,\pi_t(x_t))+ \sigma_t\big(x_t,Q_t(x_t,\pi_t(x_t)),v^{c,\pi}_{t+1}(h_t,\cdot)\big)\\
&=
c_t(x_t,\pi_t(x_t))+ \sigma_t\big(x_t,Q_t(x_t,\pi_t(x_t)),v^{c,\pi}_{t+1}(\cdot)\big).
\end{align*}
The right hand side is a function of $x_t$, rather than $h_t$, and we can write the value of the risk
measure as
$v^{c,\pi}_t(x_t)$. By induction, the result holds true for all $t$.
  \end{proof}

\begin{theorem}
	\label{th-markov2}
	For a controlled Markov system, a family of process-based dynamic risk measures $\big\{\rho_{t,T}^\pi\big\}_{t=1,\dots,T}^{\pi \in \varPi}$ is translation-invariant, stochastically conditionally time-consistent, and Markov, if and only if there exist functionals
	\[
	\sigma_t: \left\{ \big(x,Q_t(x,u)\big) : x \in \Xc, u \in \Uc_t(x) \right\} \times \Vc \to {\R}, \quad t=1\dots T-1,
	\]
	where $\Vc$ is the set of bounded measurable functions on $\Xc$, such that:
	\begin{tightlist}{ii}
		\item For all $t=1,\dots,T-1$ and all $x \in \Xc$, $\sigma_t(x,\cdot,\cdot)$ is
normalized and strongly monotonic with respect to stochastic dominance on $\big\{ \, Q_t(x,u):u \in \Uc_t(x) \, \big\}$;
		\item For all $\pi \in \varPi$, for all measurable bounded $c$, for all $t=1,\dots,T-1$, and for all $h_t \in \Xc^t$,
		\begin{equation}
		\label{li-risk-trans-map-markov2} v^{c,\pi}_t(h_t)=c_t(x_t,\pi_t(h_t))+\sigma_t\bigg(x_t,Q_t(x_t,\pi_t(h_t)),v^{c,\pi}_{t+1}(h_t,\cdot)\bigg).
		\end{equation}
	\end{tightlist}
\end{theorem}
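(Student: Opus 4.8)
The plan is to deduce Theorem~\ref{th-markov2} by chaining two results already established for controlled Markov systems: Corollary~\ref{th-markov1}, which gives the transition-risk-mapping representation with the mappings $\sigma_t$ still allowed to depend on the full history $h_t$, and Proposition~\ref{mless-sigma-mdp}, which identifies the Markov property of the risk measure with the memorylessness of $\sigma_t$ (dependence on $h_t$ only through $x_t$). No new analytic estimate is required; the content is in invoking the two equivalences in the right order and in passing back and forth between a function of $x_t$ and a function of $h_t$.

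First I would treat the ``only if'' direction. Assume $\big\{\rho_{t,T}^\pi\big\}$ is translation-invariant, stochastically conditionally time-consistent, and Markov. Corollary~\ref{th-markov1} yields functionals $\tilde\sigma_t$ on $\{(h_t,Q_t(x_t,u))\}\times\Vc$ that are normalized and strongly monotonic with respect to stochastic dominance and satisfy the recursion \eqref{li-risk-trans-map-markov1}. Since the family is in addition Markov, the hypotheses of Proposition~\ref{mless-sigma-mdp} hold, so $\tilde\sigma_t(h_t,q,v)$ depends on $h_t$ only through its last coordinate $x_t$. Defining $\sigma_t(x,q,v):=\tilde\sigma_t(h_t,q,v)$ for any $h_t$ with last coordinate $x$ (well defined by that memorylessness, and with kernel argument ranging over $\{Q_t(x,u):u\in\Uc_t(x)\}$, which depends on $x$ alone), properties (i) and (ii) of Theorem~\ref{th-markov2} are exactly (i) of Corollary~\ref{th-markov1} and the recursion \eqref{li-risk-trans-map-markov1} rewritten with this relabeling.

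For the ``if'' direction I would reverse the construction. Given $\sigma_t$ with properties (i)--(ii), set $\tilde\sigma_t(h_t,q,v):=\sigma_t(x_t,q,v)$ on $\{(h_t,Q_t(x_t,u))\}\times\Vc$. By construction $\tilde\sigma_t$ is memoryless, inherits normalization and strong monotonicity from $\sigma_t(x_t,\cdot,\cdot)$, and \eqref{li-risk-trans-map-markov2} is precisely \eqref{li-risk-trans-map-markov1} for $\tilde\sigma_t$, i.e.\ condition (ii) of Corollary~\ref{th-markov1}. Hence the ``if'' part of Corollary~\ref{th-markov1} gives that $\big\{\rho_{t,T}^\pi\big\}$ is translation-invariant and stochastically conditionally time-consistent, and then --- since $\tilde\sigma_t$ is memoryless --- the converse implication of Proposition~\ref{mless-sigma-mdp} (the backward induction showing $v^{c,\pi}_t(h_t)=v^{c,\pi}_t(h_t')$ whenever $x_t=x_t'$) gives that the family is Markov.

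The only delicate point, and the step I would be most careful about, is the ordering in the ``if'' direction: Proposition~\ref{mless-sigma-mdp} can only be used once translation invariance and stochastic conditional time consistency are in hand, so Corollary~\ref{th-markov1} must be applied first. The remainder is bookkeeping: the lift $\sigma_t\mapsto\tilde\sigma_t$ is unambiguous because the kernel argument of $\sigma_t$ is attached to the state $x=x_t$, and the recursions \eqref{li-risk-trans-map-markov1} and \eqref{li-risk-trans-map-markov2} are literally the same equation under the identification $\tilde\sigma_t(h_t,\cdot,\cdot)=\sigma_t(x_t,\cdot,\cdot)$.
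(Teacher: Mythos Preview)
Your proposal is correct and matches the paper's intent: the paper gives no separate proof of Theorem~\ref{th-markov2}, treating it as the immediate combination of Corollary~\ref{th-markov1} with Proposition~\ref{mless-sigma-mdp}, which is exactly what you spell out. Your observation about the ordering in the ``if'' direction (Corollary~\ref{th-markov1} first, then Proposition~\ref{mless-sigma-mdp}) is the one point that deserves care, and you handle it correctly.
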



Theorem \ref{th-markov2} provides us with a simple recursive formula \eqref{li-risk-trans-map-markov2}
for the evaluation of risk of a {Markov} poli\-cy~$\pi$:
\begin{align*}
v_T^{c,\pi}(x)&=c_T(x,\pi_T(x)),\quad x\in \Xc,\\
v_t^{c,\pi}(x)&=c_t(x,\pi_t(x))+\sigma_t\big(x,Q_t(x,\pi_t(x)),v_{t+1}^{c,\pi}\big),\quad x \in \Xc,\quad t=T-1,\dots,1.
\end{align*}
It involves calculation of the values of functions $v_t^{c,\pi}(\cdot)$ on the state space $\Xc$.

\subsection{Dynamic Programming}
\label{ss:markov-DP}
In this section, we fix the cost functions $c_1,\dots,c_T$ and consider a family of dynamic risk measures $\big\{\rho_{t,T}^\pi\big\}_{t=1,\dots,T}^{\pi \in \varPi}$ which is normalized, translation-invariant
(Defini\-tion~\ref{basicProperties}), stochastically conditionally time-consistent
(Definition \ref{tc-controlled}), and Markov (Defini\-tion~\ref{d:markov-risk}). Our objective
  is to analyze the risk minimization problem:
\[
\min_{\pi \in \varPi} v_1^\pi(x_1),\quad x_1\in\Xc.
\]
For this purpose, we introduce the family of \emph{value functions}:
\begin{equation}\label{p:markov}
v_t^*(h_t)=\inf_{\pi \in \varPi_{t,T}(h_t)} v_t^\pi(h_t), \quad t= 1,\dots,T, \quad h_t \in \Xc^t,
\end{equation}
where $\varPi_{t,T}(h_t)$ is the set of feasible deterministic policies $\pi = \{\pi_t,\dots,\pi_T\}$.
As stated in Theorem \ref{th-markov2},  transition risk mappings $\big\{\sigma_t\big\}_{t=1, \dots, T-1}$ exist, such that
\begin{equation}\label{eq:vc-markov}
v_t^\pi(h_t) = c_t(x_t,\pi_t(h_t))+\sigma_t\big(x_t,Q_t(x_t,\pi_t(h_t)),v_{t+1}^\pi(h_t,\cdot)\big), \quad t = 1,\dots,T-1, \ \pi \in \varPi, \ h_t \in \Xc^t.
\end{equation}
Our intention is to prove that the value functions $v_t^*(\cdot)$ are \emph{memoryless}, that is, for all
$h_t=(x_1,\dots,x_t)$ and $h_t'=(x_1',\dots,x_t')$ such that $x_t=x_t'$, we have
$v_t^*(h_t)=v_t^*(h_t')$. In this case, with a slight abuse of notation, we shall simply write $v_t^*(x_t)$.

In order to formulate the main result of this subsection, we equip the space $\Pc(\Xc)$ of probability measures
on $\Xc$ with the topology of weak convergence.

\begin{theorem} \label{th:markov-DP}
Suppose a family of dynamic risk measures $\big\{\rho_{t,T}^\pi\big\}_{t=1,\dots,T}^{\pi \in \varPi}$ is normalized, translation-invariant, stochastically conditionally time-consistent, and Markov.
We assume the following conditions:
\begin{tightlist}{iii}
\item The transition kernels $Q_t(\cdot,\cdot)$, $t=1,\dots,T$, are weakly continuous;
\item For every lower semicontinuous $v\in \Vc$ the transition risk  mappings $\sigma_t(\cdot,\cdot,v)$, $t=1,\dots,T$, are lower semicontinuous;
\item The functions $c_t(\cdot,\cdot)$, $t=1,\dots,T$, are lower semicontinuous;
\item The multifunctions $\Uc_t(\cdot)$, $t=1,\dots,T$, are compact-valued, {and upper semicontinuous}.
\end{tightlist}
Then the functions $v_t^*$, $t=1,\dots,T$, are memoryless,  lower semicontinuous,
and satisfy the following {dynamic programming equations}:
\begin{gather*}
v_T^*(x)=\min_{u \in \Uc_T(x)} c_T(x,u), \quad x \in \Xc,\\
v_t^*(x)=\min_{u \in \Uc_t(x)} \left\{ c_t(x,u) + \sigma_t\big(x,Q_t(x,u),v_{t+1}^* \big) \right\}, \quad x \in \Xc, \quad t = T-1,\dots,1.
\end{gather*}
Moreover, an optimal Markov policy $\hat{\pi}$ exists and satisfies the equations:
\begin{gather*}
\hat{\pi}_T(x) \in \argmin_{u \in \Uc_T(x)} \,c_T(x,u), \quad x \in \Xc,\\
\hat{\pi}_t(x) \in \argmin_{u \in \Uc_t(x)} \left\{ c_t(x,u) + \sigma_t\big(x,Q_t(x,u),v_{t+1}^*\big) \right\}, \quad x \in \Xc, \quad t = T-1,\dots,1.
\end{gather*}
\end{theorem}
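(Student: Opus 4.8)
The plan is to prove the theorem by backward induction on $t$, establishing at each step simultaneously that $v_t^*$ is measurable, memoryless, lower semicontinuous, and satisfies the stated dynamic programming equation, together with the existence of a measurable minimizing selector $\hat\pi_t$. The backbone of the argument is a measurable selection / optimal control argument of Berge–Kuratowski–Ryll-Nardzewski type, combined with the recursion \eqref{eq:vc-markov} furnished by Theorem \ref{th-markov2}.

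First I would treat the terminal stage. By definition $v_T^*(h_T) = \inf_{u\in\Uc_T(x_T)} c_T(x_T,u)$, which depends on $h_T$ only through $x_T$, so $v_T^*$ is memoryless and we may write $v_T^*(x) = \min_{u\in\Uc_T(x)} c_T(x,u)$; the minimum is attained because $\Uc_T(x)$ is compact and $c_T$ is lower semicontinuous. Lower semicontinuity of $v_T^*$ as a function of $x$ follows from the classical fact that the marginal (infimum) function of a lower semicontinuous function over an upper semicontinuous compact-valued multifunction is lower semicontinuous; measurability of $v_T^*$ and existence of a measurable selector $\hat\pi_T$ follow from the Kuratowski–Ryll-Nardzewski measurable selection theorem applied to the measurable multifunction $x\mapsto \argmin_{u\in\Uc_T(x)} c_T(x,u)$.

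For the inductive step, suppose $v_{t+1}^*$ is measurable, memoryless, and lower semicontinuous; write it as $v_{t+1}^*(\cdot)\in\Vc$. Define $\widetilde v_t(x) := \min_{u\in\Uc_t(x)}\{c_t(x,u)+\sigma_t(x,Q_t(x,u),v_{t+1}^*)\}$. Using continuity of $Q_t(\cdot,\cdot)$, lower semicontinuity of $\sigma_t(\cdot,\cdot,v_{t+1}^*)$ in its first two arguments (assumption (ii), applicable since $v_{t+1}^*$ is lower semicontinuous), lower semicontinuity of $c_t$, and upper semicontinuity with compact-valuedness of $\Uc_t$, the map $(x,u)\mapsto c_t(x,u)+\sigma_t(x,Q_t(x,u),v_{t+1}^*)$ is lower semicontinuous, so by the Berge-type marginal-function result the minimum is attained, $\widetilde v_t$ is lower semicontinuous (hence measurable), and there is a measurable selector $\hat\pi_t(x)\in\argmin$. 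It remains to identify $\widetilde v_t$ with $v_t^*$: the inequality $v_t^*(h_t)\le \widetilde v_t(x_t)$ is obtained by plugging into \eqref{eq:vc-markov} the policy that uses $\hat\pi_t$ at stage $t$ and a memoryless optimal continuation from stage $t+1$ (whose risk equals $v_{t+1}^*$), using monotonicity of $\sigma_t$ in its last argument; the reverse inequality $v_t^*(h_t)\ge\widetilde v_t(x_t)$ comes from noting that for any admissible $\pi\in\varPi_{t,T}(h_t)$, \eqref{eq:vc-markov} gives $v_t^\pi(h_t)=c_t(x_t,\pi_t(h_t))+\sigma_t(x_t,Q_t(x_t,\pi_t(h_t)),v_{t+1}^\pi(h_t,\cdot))$, and since $v_{t+1}^\pi(h_t,\cdot)\ge v_{t+1}^*(h_t,\cdot)=v_{t+1}^*(\cdot)$ pointwise, monotonicity of $\sigma_t$ yields $v_t^\pi(h_t)\ge c_t(x_t,\pi_t(h_t))+\sigma_t(x_t,Q_t(x_t,\pi_t(h_t)),v_{t+1}^*)\ge\widetilde v_t(x_t)$; taking the infimum over $\pi$ closes the loop. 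In particular $v_t^*$ depends on $h_t$ only through $x_t$, completing the induction.

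The main obstacle I anticipate is the careful handling of the measurable selection and the lower semicontinuity propagation through $\sigma_t$: one must verify that assumption (ii) is exactly what is needed, namely that $v_{t+1}^*$ produced by the induction is lower semicontinuous so that $\sigma_t(\cdot,\cdot,v_{t+1}^*)$ is lower semicontinuous, and then combine this with the continuity of $Q_t$ so that $(x,u)\mapsto\sigma_t(x,Q_t(x,u),v_{t+1}^*)$ is jointly lower semicontinuous on $\graph(\Uc_t)$. A secondary technical point is the construction, in the ``$\le$'' direction, of a genuinely admissible history-dependent policy achieving (or approaching) $v_{t+1}^*$ from every history $(h_t,x)$ in a measurable way — this is where the measurable selectors $\hat\pi_{t+1},\dots,\hat\pi_T$ from earlier induction steps are glued together, and one should check that the resulting policy lies in $\varPi_{t,T}(h_t)$ and that its risk, evaluated via the recursion, coincides with $v_{t+1}^*$ by the memorylessness established inductively.
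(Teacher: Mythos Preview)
Your proposal is correct and follows essentially the same approach as the paper's proof: backward induction, with the lower bound $v_t^*\ge\widetilde v_t$ obtained from monotonicity of $\sigma_t$ and $v_{t+1}^\pi\ge v_{t+1}^*$, the upper bound from the Markov policy $(\hat\pi_t,\dots,\hat\pi_T)$ built from the inductive selectors, and lower semicontinuity and measurable selection via Berge's theorem and Kuratowski--Ryll-Nardzewski. The paper additionally invokes the normal-integrand machinery of Rockafellar--Wets to handle measurability of the infimum, but the overall architecture is the same as yours.
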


\begin{proof}
We prove the memoryless property of $v_t^*(\cdot)$ and construct the optimal Markov policy by induction backwards in time.
For all $h_T \in \Xc^T$ we have
\begin{equation}
\label{infT}
v_T^*(h_T)=\inf_{\pi \in \varPi} c_T(x_T,\pi_T(h_T))=\inf_{u\in\Uc_T(x_T)} c_T(x_T,u).
\end{equation}
Since $c_T(\cdot,\cdot)$ is lower semicontinuous, it is a \emph{normal integrand}, that is, its epigraphical mapping
$x \mapsto \{ (u,\alpha)\in \Uc\times \R : c_T(x,u) \le \alpha \}$
is closed-valued and measurable \cite[Def. 14.1, Ex. 14.31]{rockafellar1998variational}. Due to assumption (iv),
the mapping
\[
\bar{c}_T(x,u)= \begin{cases}
c_T(x,u) & \text{if } u\in \Uc_T(x),\\
+\infty & \text{otherwise},
\end{cases}
\]
is a normal integrand as well.
By virtue of \cite[Thm. 14.37]{rockafellar1998variational}, the infimum in \eqref{infT} is attained and is a measurable function of $x_T$.
Hence, $v_T^*(\cdot)$ is measurable and memoryless.
{By assumptions (iii) and (iv) and Berge theorem, it is also lower semicontinuous
(see, \emph{e.g.}, \cite[Thm. 1.4.16]{aubin2009set}).} Moreover, the optimal solution mapping $\varPsi_T(x)=\big\{u\in \Uc_T(x): c_T(x,u)=v_T^*(x)\big\}$ is measurable and has nonempty and closed values. Therefore, a measurable selector $\hat{\pi}_T$ of  $\varPsi_T$ exists~\cite{kuratowski1965general}, \cite[Thm. 8.1.3]{aubin2009set}.

Suppose $v^*_{t+1}(\cdot)$ is  memoryless  and lower semicontinuous,
and Markov decision rules $\{\hat{\pi}_{t+1},\dots,\hat{\pi}_T\}$ exist such that
\[
v^*_{t+1}(x_{t+1}) = v_{t+1}^{\{\hat{\pi}_{t+1},\dots,\hat{\pi}_T\}}(x_{t+1}), \quad \forall\, h_{t+1} \in \Xc^{t+1}.
\]
Then for any $h_t \in \Xc^t$ we have
\[
v_t^*(h_t)=\inf_{\pi \in \varPi} v_t^\pi(h_t)=\inf_{\pi \in \varPi} \left\{ c_t(x_t,\pi_t(h_t))+\sigma_t\big(x_t,Q_t(x_t,\pi_t(h_t)),v_{t+1}^\pi(h_t,\cdot)\big) \right\}.
\]

On the one hand, since $v_{t+1}^\pi(h_t,\cdot) \ge v_{t+1}^*(\cdot)$ and $\sigma_t$ is non-decreasing with respect to the last argument, we obtain
\begin{equation}
\label{lowervt}
\begin{aligned}
v_t^*(h_t) &\ge \inf_{\pi \in \varPi} \left\{ c_t(x_t,\pi_t(h_t))+\sigma_t\big(x_t,Q_t(x_t,\pi_t(h_t)),v_{t+1}^*\big) \right\} \\
&= \inf_{u \in \Uc_t(x_t)} \left\{ c_t(x_t,u)+\sigma_t\big(x_t,Q_t(x_t,u),v_{t+1}^*\big) \right\}.
\end{aligned}
\end{equation}
By assumptions (i)--(iii), the mapping
$(x,u)\mapsto c_t(x,u)+\sigma_t(x,Q_t(x,u),v_{t+1}^*)$ is lower semicontinuous. Invoking \cite[Thm. 14.37]{rockafellar1998variational} and assumption (iv) again,
exactly as in the case of $t=T$, we conclude that the optimal solution mapping
\begin{align*}
\varPsi_t(x)&=\Big\{u\in \Uc_t(x): c_t(x,u)+\sigma_t\big(x,Q_t(x,u),v_{t+1}^*\big)\\
&=\inf_{u \in \Uc_t(x)} \left\{ c_t(x,u)+\sigma_t\big(x,Q_t(x,u),v_{t+1}^*\big) \right\}\Big\}
\end{align*}
is measurable and has nonempty and closed values; hence, a measurable selector $\hat{\pi}_t$ of  $\varPsi_t$ exists
\cite[Thm. 8.1.3]{aubin2009set}.
Substituting this selector into~\eqref{lowervt}, we obtain
\[
v_t^*(h_t)\ge
c_t(x_t,\hat{\pi}_t(x_t)) + \sigma_t\big(x_t,Q_t(x_t,\hat{\pi}_t(x_t)),v_{t+1}^{\{\hat{\pi}_{t+1},\dots,\hat{\pi}_T\}}\big) = v_t^{\{\hat{\pi}_t,\dots,\hat{\pi}_T\}}(x_t).
\]
In the last equation, we used \eqref{eq:vc-markov} and the fact that the decision rules $\hat{\pi}_t,\dots,\hat{\pi}_T$ are Markov.

On the other hand,
\[
v_t^*(h_t)=\inf_{\pi \in \varPi} v_t^\pi(h_t) \le v_t^{\{\hat{\pi}_t,\dots,\hat{\pi}_T\}}(x_t).
\]
Therefore, $v_t^*(h_t)=v_t^{\{\hat{\pi}_t,\dots,\hat{\pi}_T\}}(x_t)$ is measurable, memoryless, and
\begin{align*}
v_t^*(x_t)&=\min_{u \in \Uc_t(x_t)} \left\{ c_t(x_t,u) + \sigma_t\big(x_t,Q_t(x,u),v_{t+1}^*\big) \right\}\\
 &= c_t(x_t,\hat{\pi}_t(x_t)) + \sigma_t\big(x_t,Q_t(x_t,\hat{\pi}_t(x_t)),v_{t+1}^*\big).
\end{align*}
{ By assumptions (ii), (iii), (iv), and Berge theorem, $v_t^*(\cdot)$ is lower semicontinuous
(see, e.g. \cite[Thm. 1.4.16]{aubin2009set}).}
This completes the induction step.
  \end{proof}
\begin{remark}
\label{r:continuous}
If we replace semicontinuity with continuity in assumptions (ii)--(iv), then the value functions $v_t^*$, $t=1,\dots,T$, will be continuous. The proof is identical.
\end{remark}

 Let us verify the weak lower semicontinuity assumption (ii) of the mean--semide\-via\-tion transition risk mapping of Example \ref{e:semi}.
 To make the mapping Markovian, we assume that the parameter $\varkappa$ depends on $x$ only, that is,
 \begin{equation}
    \label{semi-lsc}
    \sigma(x,q,v) = \int_{\Xc} v(s)\;q(ds) + \varkappa(x)
    \left(\int_{\Xc} \bigg[\Big( v(s)- \int_{\Xc} v(s')\;q(ds')\Big)_+\bigg]^p\;q(ds)\right)^{{1}/{p}}.
    \end{equation}
    As before, $p\in [1,\infty)$. For simplicity, we skip the subscript $t$ of $\sigma$ and $\varkappa$.
    \begin{lemma}
    \label{l:semi-lsc}
    Suppose $\varkappa(\cdot)$ is continuous. Then for every
    lower semicontinuous function $v$, the mapping $(x,q)\mapsto \sigma(x,q,v)$ in \eqref{semi-lsc} is lower semicontinuous.
    \end{lemma}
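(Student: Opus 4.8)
The plan is to show lower semicontinuity by analyzing the two summands of $\sigma(x,q,v)$ separately along an arbitrary convergent sequence $(x_n,q_n)\to(x,q)$ (in $\Xc\times\Pc(\Xc)$, the latter with weak convergence), and then combining. Since $v$ is lower semicontinuous and bounded, it is bounded below, so we may write $v = \sup_k v_k$ for an increasing sequence of bounded continuous functions $v_k$ (this is the standard approximation of a bounded l.s.c.\ function from below by continuous functions on a metric space). For the first term, $\int_\Xc v\,dq$: for each fixed $k$, $\int v_k\,dq_n \to \int v_k\,dq$ by the definition of weak convergence, and $\int v\,dq_n \ge \int v_k\,dq_n$, so $\liminf_n \int v\,dq_n \ge \int v_k\,dq$; letting $k\to\infty$ and using monotone convergence gives $\liminf_n \int v\,dq_n \ge \int v\,dq$, i.e.\ $q\mapsto \int v\,dq$ is l.s.c.

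For the deviation term, the key observation is that the integrand $\big[(v(s)-m)_+\big]^p$, where $m = m(q) := \int v\,dq'$, is again a lower semicontinuous function of $s$ once $m$ is fixed, since $r\mapsto (r)_+^p$ is continuous and nondecreasing and $v$ is l.s.c. The subtlety is that $m(q_n)$ is moving. I would handle this by noting that although $m(\cdot)$ need not be continuous, it is l.s.c.\ by the previous paragraph, and moreover $v$ is bounded so $m(q_n)$ lies in a fixed compact interval and (passing to a subsequence) $m(q_n)\to m_\infty$ with $m_\infty \ge m(q)$. Then the function $s\mapsto \big[(v(s)-m_\infty)_+\big]^p$ is l.s.c., and I claim $\liminf_n \int \big[(v(s)-m(q_n))_+\big]^p q_n(ds) \ge \int\big[(v(s)-m_\infty)_+\big]^p q(ds)$. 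To see this, fix a bounded continuous $w_k \le s\mapsto (v(s)-m_\infty)_+^p$; for large $n$ we have $m(q_n)$ close to $m_\infty$, and using the Lipschitz-type bound $\big|(a)_+^p - (b)_+^p\big| \le p\,C^{p-1}|a-b|$ on the bounded range (with $C$ a bound for $|v|+|m_\infty|$), we get $\big[(v(s)-m(q_n))_+\big]^p \ge w_k(s) - \varepsilon_n$ with $\varepsilon_n\to 0$; integrating against $q_n$, passing to the $\liminf$, then letting $w_k \uparrow$ and using monotone convergence yields the claim. Since $\big[(v(s)-m_\infty)_+\big]^p \ge$ ... hmm, here I must be careful: I actually want a lower bound in terms of $m(q)$, not $m_\infty$; but because $(r-m)_+^p$ is \emph{nonincreasing} in $m$ and $m_\infty \ge m(q)$, we have $\big[(v(s)-m_\infty)_+\big]^p \le \big[(v(s)-m(q))_+\big]^p$, which is the wrong direction. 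So instead I would argue directly with $m(q_n)$ and exploit that $\liminf_n m(q_n) \ge m(q)$ together with l.s.c.\ of the integrand jointly: define $F(s,m) = \big[(v(s)-m)_+\big]^p$, which is l.s.c.\ in $s$, continuous and nonincreasing in $m$; then $F(s,m(q_n)) \ge F(s, \mu_n)$ where $\mu_n := \sup_{j\ge n} m(q_j) \downarrow \mu_\infty \le$ ... this again goes the wrong way. The clean fix: since $m(\cdot)$ is l.s.c., for any $\delta>0$ eventually $m(q_n) \le m(q)+\delta$ is \emph{false} in general — rather $m(q_n) \ge m(q)-\delta$ eventually, giving $F(s,m(q_n)) \le F(s,m(q)-\delta)$, still wrong.

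The right approach, which I would actually carry out, is to bypass $m$ entirely by using the variational (infimal) representation of the $p$-semideviation: for $p\ge 1$,
\[
\varkappa(x)\Big(\!\int \big[(v(s)-m(q))_+\big]^p q(ds)\Big)^{1/p} = \sup\Big\{ \int v\,dg - \int v\,dq \ :\ g \in \Mc_p(q)\Big\},
\]
where $\Mc_p(q)$ is a suitable set of measures absolutely continuous w.r.t.\ $q$ with density in the $\varkappa(x)$-scaled unit ball of $L^{p'}(q)$ (the dual representation of the mean--semideviation risk measure, cf.\ \cite{ogryczak2001consistency}); then $\sigma(x,q,v)$ is a supremum of terms each of which is l.s.c.\ in $(x,q)$ — being a sum of the already-handled l.s.c.\ term $-\int v\,dq$ ... no, $-\int v\,dq$ is u.s.c., not l.s.c. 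Given these sign difficulties, the honest plan is: prove the \emph{first} term is l.s.c.\ as above; prove the deviation term is l.s.c.\ by a direct $\varepsilon$-argument combining (a) weak convergence applied to continuous minorants of $s\mapsto (v(s)-m(q))_+^p$ \emph{with the fixed limit mean} $m(q)$, and (b) a uniform estimate showing that replacing $m(q)$ by $m(q_n)$ changes the integral by at most $p\,C^{p-1}|m(q_n)-m(q)|\to 0$, where the convergence $m(q_n)\to m(q)$ is \emph{not} available in general — so (b) fails.

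Therefore the genuine main obstacle, which I flag explicitly, is precisely that $q\mapsto m(q)=\int v\,dq$ is only l.s.c.\ (not continuous) when $v$ is merely l.s.c., so the "centering constant" inside the positive part can jump \emph{upward} in the limit, which \emph{decreases} the deviation term and thus threatens l.s.c. I expect the resolution to be: restrict first to \emph{continuous} bounded $v$, where $m(\cdot)$ \emph{is} continuous, so both the inner integrand (jointly continuous in $(s,m)$) and the whole expression pass to the limit by a standard weak-convergence-plus-uniform-continuity argument, giving \emph{continuity} of $(x,q)\mapsto\sigma(x,q,v)$ for continuous $v$; then for general l.s.c.\ bounded $v$, write $v=\sup_k v_k$ with $v_k$ continuous increasing, observe $\sigma(x,q,v)\ge\sigma(x,q,v_k)$ because $\sigma(x,q,\cdot)$ is monotone (Example~\ref{e:semi} gives a monotone risk measure), take $\liminf$ over $(x_n,q_n)$ using continuity for $v_k$, and finally let $k\to\infty$ — but this only yields $\liminf\sigma(x_n,q_n,v)\ge\sup_k\sigma(x,q,v_k)$, and one must check $\sup_k\sigma(x,q,v_k)=\sigma(x,q,v)$, which follows from monotone convergence applied to both the mean term and the deviation term (the latter using that $(v_k-m(q_k))_+^p\uparrow(v-m(q))_+^p$ pointwise, where $m(q_k)=\int v_k\,dq\uparrow\int v\,dq=m(q)$, again by monotone convergence — and here the monotone increase is exactly what makes the positive-part integrand converge monotonically upward, so no jump). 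This last chain is the crux and where I would spend the most care.
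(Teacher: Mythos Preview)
Your approach---reduce to continuous $v$ via monotone approximation from below, then pass to the limit in $k$---is different from the paper's and, with one correction, would work. The error is in your final step: the claim that $(v_k - m_k)_+^p \uparrow (v - m)_+^p$ pointwise (where $m_k = \int v_k\,dq$, $m = \int v\,dq$, with $q$ fixed) is false. Both $v_k(s)$ and $m_k$ increase with $k$, but their difference need not: take $\Xc = \{a,b\}$, $q$ uniform, $v_1 = (2,0)$, $v_2 = v = (2,2)$; then $v_1(a) - m_1 = 1 > 0 = v_2(a) - m_2$, so the positive part drops. The repair is immediate: since $v_k \to v$ and $m_k \to m$ pointwise and everything is uniformly bounded, dominated (not monotone) convergence gives $\int (v_k - m_k)_+^p\,dq \to \int (v - m)_+^p\,dq$, hence $\sigma(x,q,v_k) \to \sigma(x,q,v)$, which combined with monotonicity of $\sigma$ in the last argument yields $\sup_k \sigma(x,q,v_k) = \sigma(x,q,v)$ as you need.

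The paper takes a more direct route that avoids approximating $v$ altogether. Writing $m = \int v\,dq$ and $m_k = \int v\,dq_k$, it uses the pointwise bound $[v(s) - m]_+ \le [v(s) - m_k]_+ + [m_k - m]_+$, then the $L^p(q_k)$ triangle inequality, and adds $m$ to both sides. The error term $[m_k - m]_+$ combines with $m$ to give $\max(m, m_k)$, which by lower semicontinuity of $q \mapsto \int v\,dq$ is eventually $\le m_k + \varepsilon$. This yields
\[
m + \Big(\textstyle\int [v - m]_+^p\,dq_k\Big)^{1/p} \;\le\; m_k + \Big(\textstyle\int [v - m_k]_+^p\,dq_k\Big)^{1/p} + \varepsilon,
\]
and taking $\liminf_k$ (the left integrand is now a \emph{fixed} l.s.c.\ function of $s$, so Portmanteau applies) finishes the case $\varkappa \equiv 1$; a convex-combination observation handles general continuous $\varkappa$. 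The paper's trick is slicker because it never leaves the given $v$ and sidesteps precisely the ``moving center'' obstacle you identified; your approximation scheme is more generic and would transfer more readily to other monotone transition risk mappings with the appropriate continuity along bounded increasing sequences.
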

    \begin{proof}
 Let $q_k\to q$ weakly and $x_k\to x$. For all $s\in \Xc$ we have the inequality
 \begin{align*}
 0 &\le \bigg[v(s)- \int_{\Xc} v(s')\;q(ds')\bigg]_+ \\
 &\le \bigg[v(s)- \int_{\Xc} v(s')\;q_k(ds')\bigg]_+
 + \bigg[\int_{\Xc} v(s')\;q_k(ds')-\int_{\Xc} v(s')\;q(ds')\bigg]_+.
 \end{align*}
 By the triangle inequality for the norm in ${\Lc}_p(\Xc,\Bc(\Xc),q_k)$,
 \begin{multline*}
 \left(\int_{\Xc} \bigg[v(s)- \int_{\Xc} v(s')\;q(ds')\bigg]_+^p \;q_k(ds)\right)^{{1}/{p}} \\
 \le
 \left(\int_{\Xc} \bigg[v(s)- \int_{\Xc} v(s')\;q_k(ds')\bigg]_+^p \;q_k(ds)\right)^{{1}/{p}}
 + \bigg[\int_{\Xc} v(s')\;q_k(ds')-\int_{\Xc} v(s')\;q(ds')\bigg]_+.
 \end{multline*}
 Adding $\int_{\Xc} v(s)\,q(ds)$ to both sides, we obtain
 \begin{multline*}
 \int_{\Xc} v(s)\;q(ds) + \left(\int_{\Xc} \bigg[v(s)- \int_{\Xc} v(s')\;q(ds')\bigg]_+^p \;q_k(ds)\right)^{{1}/{p}} \\
 \le
 \left(\int_{\Xc} \bigg[v(s)- \int_{\Xc} v(s')\;q_k(ds')\bigg]_+^p \;q_k(ds)\right)^{{1}/{p}}
 + \max\bigg[\int_{\Xc} v(s)\;q_k(ds),\int_{\Xc} v(s)\;q(ds)\bigg].
 \end{multline*}
    By the lower semicontinuity of $v$ and weak convergence of $q_k$ to $q$, we have
    \[
    \int_{\Xc} v(s)\;q(ds) \le \liminf_{k\to \infty} \int_{\Xc} v(s)\;q_k(ds),
    \]
    that is, for every $\varepsilon>0$, we can find $k_\varepsilon$ such that for all $k \ge k_\varepsilon$
    \[
    \int_{\Xc} v(s)\;q_k(ds) \ge \int_{\Xc} v(s)\;q(ds)  - \varepsilon.
    \]
    Therefore, for these $k$ we obtain
    \begin{multline*}
 \int_{\Xc} v(s)\;q(ds) + \left(\int_{\Xc} \bigg[v(s)- \int_{\Xc} v(s')\;q(ds')\bigg]_+^p \;q_k(ds)\right)^{{1}/{p}} \\
 \le
 \int_{\Xc} v(s)\;q_k(ds) +
 \left(\int_{\Xc} \bigg[v(s)- \int_{\Xc} v(s')\;q_k(ds')\bigg]_+^p \;q_k(ds)\right)^{{1}/{p}}
 + \varepsilon.
  \end{multline*}
  Taking the ``$\liminf$'' of both sides, and using the weak convergence of $q_k$ to $q$ and the lower semicontinuity of the functions integrated, we conclude that
 \begin{multline*}
  \int_{\Xc} v(s)\;q(ds) + \left(\int_{\Xc} \bigg[v(s)- \int_{\Xc} v(s')\;q(ds')\bigg]_+^p \;q(ds)\right)^{{1}/{p}}\\
  \le
 \liminf_{k\to \infty} \left\{ \int_{\Xc} v(s)\;q_k(ds) +
 \left(\int_{\Xc} \bigg[v(s)- \int_{\Xc} v(s')\;q_k(ds')\bigg]_+^p \;q_k(ds)\right)^{{1}/{p}} \right\}+ \varepsilon.
  \end{multline*}
 As $\varepsilon>0$ was arbitrary, the last relation proves the lower semicontinuity of $\sigma$ in the case when $\varkappa(x)\equiv 1$.
 The case of a continuous $\varkappa(x)\in [0,1]$ can be now easily analyzed by noticing that $\sigma(x,q,v)$ is a convex combination
 of the expected value, and the risk measure of the last displayed relation:
 \begin{multline*}
 \sigma(x,q,v) = \big( 1 -\varkappa(x)\big) \int_{\Xc} v(s)\;q(ds)\\{}  + \varkappa(x)
 \left\{ \int_{\Xc} v(s)\;q(ds)
  +
\left(\int_{\Xc} \bigg[v(s)- \int_{\Xc} v(s')\;q(ds')\bigg]_+^p \;q(ds)\right)^{{1}/{p}} \right\}.
 \end{multline*}
 As both components are lower semicontinuous in $(x,q)$, so is their sum.
      \end{proof}
    We can also verify
the weak continuity of the Average-Value-at-Risk transition risk mapping of Example \ref{e:avar-sigma}.
 To make the mapping Markovian, we assume that the parameter $\alpha$ depends on $x$ only, that is,
\begin{equation}
			\label{avar-lsc}
			\sigma(x,q,v) = \min_{\eta\in \R}
\left\{\eta + \frac{1}{\alpha(x)}\int_{\Xc} \big(  v(s)- \eta \big)_+\;q(ds)\right\},
\end{equation}
    For simplicity, we skip the subscript $t$ of $\sigma$ and $\alpha$.
    \begin{lemma}
    \label{l:avar-lsc}
    Suppose $\alpha(\cdot)$ is continuous and takes values in $[\alpha_{\min},\alpha_{\max}]\subset (0,1)$. Then for every continuous function $v$, the mapping $(x,q)\mapsto \sigma(x,q,v)$ in \eqref{avar-lsc} is continuous.
    \end{lemma}
    \begin{proof}
    Consider the function
    \begin{equation}
    \label{braces}
    (q,\eta) \mapsto \int_{\Xc} \big(  v(s)- \eta \big)_+\;q(ds).
    \end{equation}
    Suppose $q_k\to q$ weakly and  $\eta_k\to \eta$. We have
    \[
  \int_{\Xc} \big(  v(s)- \eta \big)_+\;q_k(ds) \le
   \int_{\Xc} \big(  v(s)- \eta_k \big)_+\;q_k(ds) + |\eta_k-\eta|.
    \]
    Taking the ``$\liminf$'' of both sides and using the weak convergence of $q_k$ to $q$
    and the lower semicontinuity of the functions integrated, we conclude that
    \[
    \int_{\Xc} \big(  v(s)- \eta \big)_+\;q(ds) \le \liminf_{k\to\infty}\int_{\Xc} \big(  v(s)- \eta \big)_+\;q_k(ds) 
    \le
    \liminf_{k\to\infty}\int_{\Xc} \big(  v(s)- \eta_k \big)_+\;q_k(ds).
    \]
    Thus the function \eqref{braces} is lower semicontinuous. It follows that the function
    being minimized with respect to $\eta$ in \eqref{avar-lsc} is jointly lower semicontinuous
    with respect to $(x,q,\eta)$. Since $q_k\to q$ weakly, the collection $\{q_k\}$ is \emph{tight}
    (Prohorov's theorem; see, \emph{e.g.}, \cite[Sec. 1.6]{billingsley2013convergence}).
    Since $v(\cdot)$ is continuous, the measures $q_k\circ v^{-1}$ are tight as well. Hence,
   a bounded interval $C\subset \R$ exists such that all $\alpha$-quantiles of all $q_k\circ v^{-1}$ and $q\circ v^{-1}$
    are contained in $C$, for all $\alpha \in [\alpha_{\min},\alpha_{\max}]$.
    Therefore, we can restrict $\eta$ to $C$ in \eqref{avar-lsc},
    without affecting the values of $\sigma(x_k,q_k,v)$ and $\sigma(x,q,v)$.
    By Berge theorem,
    the optimal value in \eqref{avar-lsc} is continuous (see, \emph{e.g.}, \cite[Thm. 1.4.16]{aubin2009set}).
     
    \end{proof}

\bibliographystyle{plain} 

\end{document}